\newtheorem{theorem}{Theorem}[section]
\newtheorem{lemma}[theorem]{Lemma}
\newtheorem{definition}[theorem]{Definition}
\newtheorem{example}[theorem]{Example}
\newtheorem*{example*}{Example}
\newtheorem{observation}[theorem]{Observation}
\newtheoremstyle{myexample}{3pt}{3pt}{\rmfamily}{}{\itshape}{:}{ }{\thmname{#1}\thmnumber{ #2}\thmnote{ (#3)}}
\theoremstyle{myexample}
\newtheoremstyle{myremark}{3pt}{3pt}{\rmfamily}{}{\itshape}{:}{ }{\thmname{#1}}
\theoremstyle{myremark}
\newtheorem*{observation*}{Observation}
\newtheoremstyle{conjecture}{3pt}{3pt}{\itshape}{}{\bfseries}{.}{ }{\thmname{#1}\thmnote{ (#3)}}
\theoremstyle{conjecture}
\newtheorem*{question*}{Question}
\newtheorem{theorem*}{Theorem}
\numberwithin{equation}{section}
\newcounter{algorithm}
\renewcommand{\thealgorithm}{\thesection.\arabic{algorithm}}
\def\Forb{\mathop{\mathrm{Forb}}\nolimits}
\newcommand{\overbar}[1]{\mkern 1.5mu\overline{\mkern-1.5mu#1\mkern-1.5mu}\mkern 1.5mu}
\def\str#1{\mathbf {#1}}
\def\K{{\mathcal K}}
\def\Fraisse{Fra\"{\i}ss\' e}
\def\role{role}
\begin{document}

\title{Completing graphs to metric spaces}

\author[A. Aranda]{Andr\'es Aranda}
\address{Computer Science Institute of Charles University (IUUK)\\ Charles University\\ Prague, Czech Republic}
\email{andres.aranda@gmail.com}

\author[D. Bradley-Williams]{David Bradley-Williams}
\address{Mathematisches Institut\\ Heinrich-Heine-Universit\"at\\ D\"usseldorf, Germany}
\email{david.bradley-williams@uni-duesseldorf.de}

\author[E. K. Hng]{Eng Keat Hng}
\address{Department of Mathematics\\ London School of Economics and Political Science\\ London, UK}
\email{e.hng@lse.ac.uk}

\author[J. Hubi\v cka]{Jan Hubi\v cka}
\address{Department of Applied Mathematics (KAM)\\ Charles University\\ Prague, Czech Republic}
\email{hubicka@iuuk.mff.cuni.cz}

\author[M. Karamanlis]{Miltiadis Karamanlis}
\address{Department of Mathematics\\ National Technical University of Athens\\ Athens, Greece}
\email{kararemilt@gmail.com}

\author[M. Kompatscher]{Michael Kompatscher}
\address{Department of Algebra\\ Charles University\\ Prague, Czech Republic}
\email{michael@logic.at}

\author[M. Kone\v cn\'y]{Mat\v ej Kone\v cn\'y}
\address{Department of Applied Mathematics (KAM)\\ Charles University\\ Prague, Czech Republic}
\email{matej@kam.mff.cuni.cz}

\author[M. Pawliuk]{Micheal Pawliuk}
\address{Department of Mathematics and Statistics\\ University of Calgary\\ Calgary, Canada}
\email{mpawliuk@ucalgary.ca}
\subjclass[2000]{Primary: 05D10, 20B27, 54E35, Secondary: 03C15, 22F50, 37B05}
\keywords{Ramsey class, metric space, homogeneous structure, metrically homogeneous graph, extension property for partial automorphisms}

\thanks{David Bradley-Williams is a member of the research training group GRK 2240 funded by the German Science Foundation (DFG). Jan Hubi\v cka and Mat\v ej Kone\v cn\'y are supported by project 18-13685Y of the Czech Science Foundation (GA\v CR). Jan Hubi\v cka is also supported by Charles University project Progres Q48. Michael Kompatscher is supported by the grant P27600 of the Austrian Science Fund (FWF) and Charles University Research Centre programs PRIMUS/SCI/12 and UNCE/SCI/022. Mat\v ej Kone\v cn\'y is also supported by Charles University, project GA UK No 2017--260452 (SVV)}
\begin{abstract}
We prove that certain classes of metrically homogeneous graphs omitting triangles of odd short perimeter as well as triangles of long perimeter have the extension property for partial automorphisms and we describe their Ramsey expansions.
\end{abstract}
\maketitle
\markleft{A. ARANDA ET AL.}
\centerline{Dedicated to Norbert Sauer on the occasion of his 70th birthday.}

\section{Introduction}
Given positive integers $\delta$, $K$ and $C$ we consider the class
$\mathcal A^\delta_{K,C}$ of finite metric spaces $\str{M}=(M,d)$ with integer
distances such that:
\begin{itemize}
 \item $d(x,y)\leq \delta$ for every $x,y\in M$ (the parameter $\delta$ is the {\em diameter} of $\mathcal A^\delta_{K,C}$);
 \item the perimeter of every triangle is less than $C$; and
 \item if a triangle has odd perimeter, then it is at least $2K+1$.
\end{itemize}
Here a {\em triangle} is any triple of distinct vertices $u,v,w\in M$ and its {\em perimeter} is $d(u,v)+d(v,w)+d(w,u)$.
We call parameters $\delta$, $K$ and $C$ {\em acceptable} if it
holds that $\delta\geq 2$, $1\leq K\leq \delta$ and $2\delta+K<C\leq 3\delta+1$
(this covers acceptable parameters in the sense of~\cite{Cherlin2013}, with the exception
of bipartite graphs).
Our main results can be stated as follows, with precise definitions to follow in sections \ref{sec:EPPA} and \ref{sec:RC}.
\begin{theorem}
\label{thm:EPPA}
The class $\mathcal A^\delta_{K,C}$ has the extension property for partial automorphisms (EPPA) for every acceptable choice of $\delta$, $K$ and $C$.
\end{theorem}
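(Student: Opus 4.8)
The plan is to prove EPPA through the completion-based machinery for EPPA (the strengthenings of the Herwig--Lascar theorem), by treating a metric space $\str{M}=(M,d)$ as a structure in the relational language $\{R_1,\dots,R_\delta\}$, where $R_i(x,y)$ asserts $d(x,y)=i$. In this encoding the members of $\mathcal A^\delta_{K,C}$ are precisely the complete edge-labelled graphs (every pair carries exactly one $R_i$) in which every triangle is metric, has perimeter less than $C$, and has perimeter at least $2K+1$ whenever that perimeter is odd. First I would introduce the companion class $\mathcal R$ of \emph{partial} such structures: finite $\{R_1,\dots,R_\delta\}$-structures in which each pair carries at most one relation and in which every \emph{complete} triangle (all three pairs labelled) satisfies the three conditions above. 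Thus $\mathcal A^\delta_{K,C}$ sits inside $\mathcal R$ as the subclass of complete structures.

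The argument then splits into two tasks. The first is to establish EPPA for $\mathcal R$ itself. Since $\mathcal R$ is exactly the class of structures omitting a finite family of \emph{irreducible} obstructions --- the two-vertex structures carrying two relations together with the finitely many non-metric, over-perimeter, and odd short-perimeter triangles, each of which is complete on at most three vertices --- it falls directly under the Herwig--Lascar theorem and its coherent EPPA refinement; I would invoke that result (or construct the witness explicitly by a valuated wreath-product argument) to obtain a \emph{coherent} EPPA witness $\str B'\in\mathcal R$ for any given $\str A'\in\mathcal R$.

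The second, and genuinely hard, task is the \textbf{completion lemma}: every $\str A\in\mathcal R$ embeds, on the same vertex set, into a complete structure $\str B\in\mathcal A^\delta_{K,C}$. The difficulty is that the three constraints must be reconciled \emph{simultaneously} for \emph{every} triangle created when the missing distances are filled in, not merely for those already present. A capped shortest-path metric automatically repairs the triangle inequality, but it can overshoot the perimeter ceiling $C$ or manufacture an odd triangle of perimeter below $2K+1$; conversely, pushing all unknown distances to a single large even value risks violating the upper perimeter bound. I expect this to be the main obstacle, and it is precisely where the acceptability hypotheses $1\le K\le\delta$ and $2\delta+K<C\le 3\delta+1$ enter: they guarantee enough room between the odd lower bound and the perimeter ceiling for a uniform completion rule (assigning each missing distance a value determined by the shortest-path distances together with a fixed parity correction) to be consistent. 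I would fix such a rule and then verify the three conditions by a case analysis on the newly formed triangles, showing that any violating triangle would force one of the forbidden complete triangles to appear already in $\str A$.

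Finally, because the chosen completion rule is canonical --- the completed distances are invariant under isomorphisms of the partial structure, so the completion commutes with automorphisms --- it is an automorphism-preserving, locally finite completion. Combining the coherent EPPA witness $\str B'\in\mathcal R$ for the partial reduct of a given $\str A\in\mathcal A^\delta_{K,C}$ with the completion lemma applied to $\str B'$ yields a witness $\str B\in\mathcal A^\delta_{K,C}$: partial automorphisms of $\str A$ extend to automorphisms of $\str B'$, which survive the completion and become automorphisms of $\str B$. This is exactly the transfer step of the completion-based EPPA machinery, and it delivers EPPA for $\mathcal A^\delta_{K,C}$ for every acceptable choice of $\delta$, $K$ and $C$.
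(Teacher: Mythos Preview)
Your proposal has a genuine gap: the completion lemma you state is false. Your class $\mathcal R$ consists of partial structures omitting only the doubly-labelled pairs and the finitely many forbidden \emph{triangles}, and you claim that every $\str A\in\mathcal R$ completes on the same vertex set to a member of $\mathcal A^\delta_{K,C}$. But longer cycles, which contain no triangle at all and hence lie in $\mathcal R$ vacuously, can already be non-completable. A non-metric cycle suffices: in $\mathcal A^6_{2,15}$ the $4$-cycle with successive edge lengths $1,1,1,6$ is in $\mathcal R$, yet any completion must give the diagonal $v_1v_3$ a value at most $2$ (from $v_1v_2v_3$) and at least $5$ (from $v_1v_3v_4$). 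The paper in fact exhibits many such obstacles (e.g.\ the five-vertex cycle $11665$ and the six-vertex cycle $666616$), and the whole point of its Finite Obstacles Lemma is that the obstacle set is \emph{not} just the forbidden triangles but a finite family of cycles of length up to $2^\delta\cdot 3$. So your plan to ``verify the three conditions by a case analysis on the newly formed triangles, showing that any violating triangle would force one of the forbidden complete triangles to appear already in $\str A$'' cannot succeed: the witness to non-completability need not be a triangle.

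The paper's route is structurally the same as yours---Herwig--Lascar plus an automorphism-preserving completion---but the division of labour is different. It first builds an explicit completion procedure (the ``magic parameter'' algorithm, which is essentially the parity-corrected rule you allude to) and proves that it succeeds on every graph that admits \emph{any} completion. From this it extracts, by running the algorithm backwards from a forbidden triangle, a finite family $\mathcal O$ of non-completable cycles, and it is $\Forb_\delta(\mathcal O)$, not your $\mathcal R$, to which Herwig--Lascar is applied. Only then does the completion step go through. In short, the hard technical content you correctly identified---designing and analysing the completion rule---is indispensable, but it must be deployed \emph{before} fixing the obstacle set for Herwig--Lascar, not after.
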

\begin{theorem}
\label{thm:ramsey}
The class $\overrightarrow{\mathcal A}^\delta_{K,C}$ of all linear orderings of metric spaces in $\mathcal A^\delta_{K,C}$ is a Ramsey class for every acceptable choice of $\delta$, $K$ and $C$.
\end{theorem}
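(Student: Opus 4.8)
The plan is to deduce Theorem~\ref{thm:ramsey} from the general machinery relating Ramsey classes to completion properties developed by Hubi\v cka and Ne\v set\v ril, rather than attacking the partition property directly. First I would reformulate $\mathcal A^\delta_{K,C}$ as a class of relational structures in the language $L=\{R_1,\dots,R_\delta\}$, where $R_i(x,y)$ records that two points lie at distance $i$. In this language there is a natural ambient class $\mathcal R$ consisting of all finite $L$-structures in which each pair carries at most one relation $R_i$ — that is, partially edge-labelled complete graphs with labels in $\{1,\dots,\delta\}$. This $\mathcal R$ is a free amalgamation class, so its order expansion $\overrightarrow{\mathcal R}$ is Ramsey by the Ne\v set\v ril--R\"odl theorem. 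The metric spaces of $\mathcal A^\delta_{K,C}$ sit inside $\mathcal R$ as the fully labelled structures avoiding the forbidden triangles, and the task becomes showing that $\overrightarrow{\mathcal A}^\delta_{K,C}$ inherits Ramseyness from $\overrightarrow{\mathcal R}$.

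The bridge is the notion of a \emph{completion}: given a partially labelled $\mathbf A\in\mathcal R$, a completion is an extension of its labelling to a full metric space of $\mathcal A^\delta_{K,C}$ on the same vertex set. The key result to establish — the combinatorial heart of the paper, as its title suggests — is a local finiteness of completions lemma, stating that there is a bound $n=n(\delta,K,C)$ such that any $\mathbf A\in\mathcal R$ all of whose substructures on at most $n$ vertices admit a completion in $\mathcal A^\delta_{K,C}$ itself admits such a completion. Granting this lemma the Ramsey property follows mechanically: by the Hubi\v cka--Ne\v set\v ril theorem, a locally finite completion subclass of a Ramsey class has a Ramsey order expansion, so $\overrightarrow{\mathcal A}^\delta_{K,C}$ is Ramsey.

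The main obstacle is therefore the completion lemma. I expect the completed metric to be built from a shortest-path distance capped at the diameter $\delta$, but with a correction forced by the two nonstandard constraints, and the genuine difficulty is the interaction between the odd-perimeter lower bound $2K+1$ and the perimeter upper bound $C$. A naive shortest-path completion can easily create a triangle whose perimeter is odd and small, or whose perimeter reaches $C$; one must argue that, within the bound $n$, the local absence of such obstructions propagates to their global absence. This is precisely where the acceptability hypotheses $\delta\ge 2$, $1\le K\le \delta$ and $2\delta+K<C\le 3\delta+1$ enter: under them the non-completable configurations can be characterised as homomorphic images of finitely many bounded "bad cycles", so that a single uniform $n$ suffices. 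Verifying that every admissible partial labelling avoiding these bounded obstructions can be completed — checking the triangle inequality, the perimeter cap, and the parity condition simultaneously for the chosen distance function — is the calculation I expect to occupy the bulk of the argument.
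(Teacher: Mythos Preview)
Your proposal is correct and matches the paper's approach: Ramseyness is deduced from the Hubi\v cka--Ne\v set\v ril theorem (stated here as Theorem~\ref{thm:localfini}) via a finite-obstacles lemma (Lemma~\ref{lem:obstacles}), whose proof rests on an explicit completion algorithm for partial edge-labellings. The one refinement worth flagging is that the actual completion is not a patched shortest-path completion toward $\delta$, but rather one that drives every undetermined edge toward a ``magic'' target value $M$ with $\max(K,\lceil\delta/2\rceil)\le M\le\lfloor(C-\delta-1)/2\rfloor$; it is this choice, not an ad hoc correction, that simultaneously handles the metric, $K$-bound, and $C$-bound constraints.
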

We thus give new examples of classes of metric spaces which are Ramsey when enriched by linear orders and have EPPA, extending the lists of such classes obtained in
\cite{Nevsetvril2007,The2010,Hubicka2016}
 (for Ramsey classes) and
 \cite{Solecki2005,vershik2008,Conant2015}
 (for the EPPA). While these properties were historically treated independently, we show that both results follow from general constructions (stated as Theorems~\ref{thm:localfini} and \ref{thm:herwiglascar} below) and an analysis of an algorithm to fill the gaps in incomplete structures, thus turning them into metric spaces. Before going through the details, we will explain our motivation to consider these rather special-looking classes $\mathcal A^\delta_{K,C}$ and all the necessary notions.

\medskip

It has been observed by Ne\v set\v ril ~\cite{Nevsetvril1989a,Nevsetril2005,Kechris2005}
 that Ramsey classes have the so-called {\it amalgamation property}, which in turn implies the existence of a homogeneous \Fraisse{} limit (see section \ref{sec:homogen} for precise definitions). Therefore potential candidates for Ramseyness are usually taken from the well-known classification programme of homogeneous structures 
(see~\cite{Cherlin2013} for references). Cherlin recently extended
this list by a probably complete classification of classes with metrically homogeneous graphs as limits, where the 
$\mathcal A^\delta_{K,C}$ play an important \role{}.  For us it is particularly interesting, because the standard completion algorithm for metric spaces (which we introduce below) generally fails to produce metric spaces satisfying the $C$ bound. More complete results in this direction will appear in~\cite{Aranda2017}.

An {\em edge-labelled graph} $\str{G}$ is a pair $(G,d)$ where $G$ is the {\em vertex
set} and $d$ is a partial function from $G^2$ to $\mathbb N$ such that $d(u,v)=0$ if and only if $u=v$, and either $d(u,v)$ and $d(v,u)$ are both undefined or $d(u,v)=d(v,u)$ for each pair of vertices $u,v$ (equivalently, one could think of an edge-labelled graph as a relational structure). A pair of vertices $u,v$ such that $d(u,v)$ is defined is called an {\em edge} of $\str{G}$. We also call $d(u,v)$ the {\em length of the edge} $u,v$. We will refer to edge-labelled graphs simply as graphs when no confusion can arise; unless otherwise stated, subgraphs are assumed to be induced. 
A graph $\str{G}$ is {\em complete} if every pair of vertices
forms an edge and $\str{G}$ is called a {\em metric space} if the {\em triangle inequality} holds,
that is $d(u,w)\leq d(u,v)+d(v,w)$ for every $u,v,w\in G$.
A graph $\str{G}=(G,d)$ is {\em metric} if there exists a metric
space $\str{M}=(G,\bar{d})$ such that $d(u,v)=\bar{d}(u,v)$ for every edge $u,v$ of $\str{G}$.
Such a metric space $\str{M}$ is also called a {\em metric completion} of
$\str{G}$.

Given an edge-labelled graph $\str{G}=(G,d)$ the {\em path distance $d^+(u,v)$ of $u$ and $v$} is the minimum $\ell=\sum_{1\leq i\leq n-1}d(u_i,u_{i+1})$ over all possible sequences of vertices for which $u_1=u$, $u_n=v$ and $d(u_i,u_{i+1})$ is defined for every $i\leq n-1$. If there is no such sequence we put $\ell=\infty$.
It is well known that a connected graph $\str{G}=(G,d)$ is metric if and only if
$d(u,v)=d^+(u,v)$ for every edge of $\str{G}$. In this case $(G,d^+)$ is a metric completion of $\str{G}$
which we refer to as the {\em shortest path completion}.
This completion-algorithm also leads to an easy
characterisation of metric graphs. The graph $\str{G}$ is metric if and only if it does not contain a {\em non-metric cycle}, 
that is, an edge-labelled graph cycle such that one distance in the cycle is greater than the sum of the remaining distances. See e.g.~\cite{Hubicka2016} for details.
  In this paper we are going to introduce a generalisation of the shortest path completion.

\subsection{Extension property for partial automorphisms} \label{sec:EPPA}

Given two edge-labelled graphs $\str{G}=(G,d)$ and $\str{G}'=(G',d')$ a {\em homomorphism} $G\to G'$ is a function $f\colon G\to G'$
such that $d(x,y)=d'(f(x),f(y))$ whenever $d(x,y)$ is defined.
A homomorphism $f$ is an {\em embedding} (or {\em isometry} when the structures are metric spaces) if $f$ is one-to-one and $d(x,y)=d'(f(x),f(y))$ whenever either side
makes sense. A surjective embedding is an {\em isomorphism} and and {\em automorphism} is an isomorphism $\str{G}\to \str{G}$. A graph $\str{G}$ is an {\em (induced) subgraph} of $\str{H}$ if the identity mapping is an embedding $\str{G}\to \str{H}$.

A {\em partial automorphism} of an edge-labelled graph  $\str{G}$
is an isomorphism $f \colon \str{H} \to \str{H}'$ where $\str{H}, \str{H}'$ are subgraphs of $\str{G}$.  
We say that a class of finite structures $\K$  has the {\em extension property for
partial automorphisms}  ({\em EPPA}, sometimes called the {\em Hrushovski extension
property}) if whenever $\str{A} \in \K$ there is $\str{B} \in \K$ such that
$\str{A}$ is a subgraph of $\str{B}$ and such that
every partial automorphism of $\str{A}$ extends to an automorphism of $\str{B}$.

In addition to being a non-trivial and beautiful combinatorial property,
classes with EPPA have further interesting properties. For example, Kechris and
Rosendal~\cite{Kechris2007} have shown that the automorphism groups of their
\Fraisse{} limits are amenable.

In 1992, Hrushovski~\cite{hrushovski1992}
 showed that the class $\mathcal G$ of
all finite graphs has EPPA.  A combinatorial argument for Hrushovski's result
was given by Herwig and Lascar~\cite{herwig2000}
 along with a non-trivial
strengthening for certain, more restricted, classes of structures described by
forbidden homomorphisms. This result was independently used by
Solecki~\cite{Solecki2005} and Vershik~\cite{vershik2008} to prove EPPA for the
class of all finite metric spaces (with integer, rational or real distances -- for our presentation we will consider integer distances only).
 Recently Conant further developed this argument to generalised metric spaces~\cite{Conant2015}, where the distances are elements of a distance monoid. As a special case this implies EPPA for classes of metric spaces with distances limited to a given set
$S\subseteq \mathbb N$ and to metric graphs omitting short triangles of odd perimeter.

\subsection{Ramsey classes}\label{sec:RC}
For edge-labelled graphs $\str{A},\str{B}$ denote by ${\str{B}\choose \str{A}}$ the set
of all subgraphs of $\str{B}$ that are isomorphic to $\str{A}$.   A class
$\mathcal C$ of structures is a \emph{Ramsey class} if for every two objects $\str{A}$ and
$\str{B}$ in $\mathcal C$ and for every positive integer $k$ there exists a
structure $\str{C}$ in $\mathcal C$ such that the following holds: For every
partition of ${\str{C}\choose \str{A}}$ into $k$ classes there exists an
$\widetilde{\str B} \in {\str{C}\choose \str{B}}$ such that
${\widetilde{\str{B}}\choose \str{A}}$ is contained in a single class of the partition.

The notion of Ramsey classes was isolated in the 1970s and, being a
strong combinatorial property, it has found numerous applications, for example,
in topological dynamics~\cite{Kechris2005}.
It was independently proved by Ne\v set\v ril and R\"odl~\cite{Nevsetvril1976}
and Abramson--Harrington ~\cite{Abramson1978}
 that the class of all finite linearly ordered
hypergraphs is a Ramsey class. Several new classes followed.  We briefly outline the results
related to Ramsey classes of metric spaces.
In 2005 Ne\v set\v ril \cite{Nevsetvril2007} showed that the class of all finite
metric spaces   is a
Ramsey class when enriched by free linear ordering of vertices (see also \cite{masulovic2016pre} for alternative proof).

 This result was
extended to some subclasses $\mathcal A_S$ of finite metric spaces where all
distances belong to a given set $S$ by Nguyen Van Th{\'e}~\cite{The2010}.
Recently Hubi\v cka and Ne\v set\v ril further generalised this result to classes $\mathcal A _S$
for all feasible choices of $S$~\cite{Hubicka2016}
  that were earlier identified by Sauer~\cite{Sauer2013},
as well as to the class of metric spaces omitting triangles of short odd perimeter.

\subsection{Obstacles to completion}

The list of subclasses of metric spaces which are Ramsey closely corresponds to
the list of classes with EPPA. The similarity of the results is not a coincidence.  All of the
proofs proceed from a given metric space and, by a non-trivial construction, build
an edge-labelled graph with either the desired Ramsey property or EPPA.
The actual ``amalgamation engines'' have been isolated and
are based on characterising each class by a set of obstacles in the sense of the definition below.
Given a set $\mathcal O$ of edge-labelled graphs with edges bounded above by $\delta$, let $\Forb_{\delta}(\mathcal O)$ denote the class
of all finite or countably infinite edge-labelled graphs $\str{G}$ whose edges are bounded above by $\delta$ such that there is no $\str{O}\in \mathcal O$ with a homomorphism
$\str{O}\to \str{G}$.

Until now we have left the notion of completion only vaguely defined. Formally, a complete edge-labelled graph $\bar{\str{G}}$ is a completion of an edge-labelled graph $\str{G}$ if there is an injective homomorphism $\str{G}\to\bar {\str{G}}$. Given a class of edge-labelled graphs $\mathcal A$ and an edge-labelled graph $\str{G}$, we call $\bar {\str{G}}$ an $\mathcal A$-completion if $\bar {\str{G}}$ is a completion of $\str{G}$ and $\bar {\str{G}}\in\mathcal A$.

\begin{definition}
Given a class of metric spaces $\mathcal A$, we say that $\mathcal O$ is the {\em
set of obstacles of $\mathcal A$} if $\mathcal A\subseteq \Forb_{\delta}(\mathcal O)$ and
moreover every finite $\str{G}\in \Forb_{\delta}(\mathcal O)$ has a completion into $\mathcal A$.
\end{definition}

The following is a specialisation of main result of~\cite[Theorem~3.2]{herwig2000}:
\begin{theorem}[Herwig--Lascar~\cite{herwig2000}]
\label{thm:herwiglascar}
Given a finite set $\mathcal{O}$ of edge-la\-belled graphs, and $\str{G}\in \Forb_{\delta}(\mathcal O)$,
 if there exists some $\str{G}'\in \Forb_{\delta}(\mathcal O)$ such that $\str{G}$ is subgraph of $\str{G}'$ and every partial isomorphism between subgraphs of $\str{G}$
extends to an automorphism of $\str{G}'$ then there exists a finite such $\overline{\str{G}} \in \Forb_{\delta}(\mathcal O)$.
\end{theorem}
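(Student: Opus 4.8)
The plan is to finitise the given infinite witness $\str{G}'$ by a group-theoretic construction whose deep input is the Ribes--Zalesskii theorem on the profinite topology of free groups. Since $\str{G}$ is finite it has only finitely many partial automorphisms $p_1,\dots,p_m$, and by hypothesis each $p_i$ extends to an automorphism $\sigma_i$ of $\str{G}'$. I would first discard the unused part of $\str{G}'$ and work inside the substructure induced on the orbit $\Gamma\cdot G$, where $\Gamma=\langle\sigma_1,\dots,\sigma_m\rangle\leq\Aut(\str{G}')$; this is a countable member of $\Forb_\delta(\mathcal O)$ on which $\Gamma$ acts with finitely many orbits and in which every $p_i$ still extends to $\sigma_i$. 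The goal is now to replace the infinite group $\Gamma$ by a suitable finite group.

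Next I would pass to the free group $F$ on symbols $x_1,\dots,x_m$ with the canonical surjection $\pi\colon F\to\Gamma$ given by $x_i\mapsto\sigma_i$, so that every vertex of $\Gamma\cdot G$ is of the form $\pi(w)(v)$ for some $w\in F$, $v\in G$. The abstract ``unfolding'' built over $G\times F$ (with each $x_i$ acting through the relations that $p_i$ imposes on $\str{G}$) then maps onto $\Gamma\cdot G$, and hence also lies in $\Forb_\delta(\mathcal O)$, since a homomorphism from some $\str{O}\in\mathcal O$ into the unfolding would compose with this map to give one into $\str{G}'$. The candidate finite object $\overline{\str{G}}$ is the analogous structure built over a finite quotient $F/N$ in place of $F$: its vertices form a quotient of $G\times(F/N)$, the cosets act on it, $\str{G}$ embeds as $G\times\{N\}$, and the images of the $\sigma_i$ give automorphisms extending the $p_i$. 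Provided $N$ is fine enough to keep this base copy of $\str{G}$ faithful --- a condition on only finitely many elements of $F$ --- the embedding and the extension of the $p_i$ come for free; the one genuinely delicate requirement is that $\overline{\str{G}}$ belong to $\Forb_\delta(\mathcal O)$.

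The crux is therefore to choose $N$ so that no $\str{O}\in\mathcal O$ admits a homomorphism into $\overline{\str{G}}$. Such a homomorphism would lift to an assignment of pairs $(v,w)\in G\times F$ to the vertices of $\str{O}$ in which the edges of $\str{O}$ force certain products of the generators to collapse modulo $N$; because $\mathcal O$ is finite and its members are bounded in size, there are only finitely many such collapse patterns to exclude, and each amounts to requiring a specified element of $F$ to fall into a coset of a finitely generated subgroup. None of these patterns is realised in $F$ itself, precisely because the unfolding already lies in $\Forb_\delta(\mathcal O)$, so each offending element lies outside the corresponding coset. I would then invoke the Ribes--Zalesskii theorem --- that finitely generated subgroups of a free group, and finite unions of their cosets, are closed in the profinite topology --- to produce a single finite-index normal subgroup $N$ whose quotient keeps every offending element outside its coset, so that $F/N$ realises none of the patterns. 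This step is by far the hardest part: correctly translating ``$\str{O}$ maps into the quotient'' into a finite list of coset-membership conditions, and verifying that Ribes--Zalesskii applies to exactly those conditions, is where all the real work lies, and it is what forces a detour through the profinite topology rather than a soft compactness argument. With such an $N$ the construction yields a finite $\overline{\str{G}}\in\Forb_\delta(\mathcal O)$ extending $\str{G}$ in which every partial automorphism of $\str{G}$ extends to an automorphism, as required.
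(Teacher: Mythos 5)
The paper does not prove this statement at all: Theorem~\ref{thm:herwiglascar} is quoted as a specialisation of Theorem~3.2 of Herwig--Lascar \cite{herwig2000} and used as a black box, so there is no in-paper argument to compare yours against. On its own merits, your outline reproduces the architecture of the original Herwig--Lascar proof: restrict to the orbit of $G$ under the group generated by the chosen extensions, unfold over the free group $F$ on the partial automorphisms (the unfolding lies in $\Forb_\delta(\mathcal O)$ because it maps homomorphically into $\str{G}'$ and membership in $\Forb_\delta(\mathcal O)$ is inherited under homomorphisms into such structures), and descend to a quotient by a finite-index normal subgroup $N$ chosen so that the finitely many ``bad'' configurations are not realised; the faithfulness of the base copy and the extension of the $p_i$ in the quotient are indeed the routine parts.

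The one genuine problem is in your description of the crux. A homomorphism of some $\str{O}\in\mathcal O$ into the finite quotient does not translate into ``a specified element of $F$ falls into a coset of a single finitely generated subgroup'': tracing a cycle of $\str{O}$ through the unfolded structure forces a word of $F$ into a \emph{product} of cosets $g_1H_1g_2H_2\cdots g_nH_n$ of finitely generated subgroups, one factor for each partial isomorphism used along the cycle. Correspondingly, the theorem you must invoke is the Ribes--Zalesskii theorem in its genuine form --- a product $H_1H_2\cdots H_n$ of finitely generated subgroups of a free group is closed in the profinite topology --- and not the statement you quote, that finitely generated subgroups and finite unions of their cosets are closed. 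That weaker statement is M.~Hall's theorem plus the triviality that finite unions of closed sets are closed, and it does not cover the product sets that actually arise; if it did, no ``detour through the profinite topology'' beyond Hall's theorem would be needed. With the correct form of Ribes--Zalesskii, each offending element lies outside the corresponding closed product set, so a single finite-index normal $N$ separates all finitely many of them at once, and your construction goes through as the standard proof of the theorem.
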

The following is a specialisation of Theorem 2.1 of ~\cite{Hubicka2016} (strong amalgamation is defined below):
\begin{theorem}[Hubi\v cka--Ne\v set\v ril~\cite{Hubicka2016}]
\label{thm:localfini}
Given a strong amalgamation class of finite metric spaces $\mathcal A$, assume that there exists a finite set of obstacles $\mathcal O$ of $\mathcal A$.
Then the class $\overrightarrow{\mathcal A}$ of all metric spaces from $\mathcal A$ along with a linear ordering of vertices is Ramsey.
\end{theorem}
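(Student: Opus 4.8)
The plan is to derive Ramseyness of $\overrightarrow{\mathcal A}$ from two ingredients that separate the combinatorics from the geometry: first, the Ramsey property of the larger, purely combinatorial class $\overrightarrow{\Forb_\delta(\mathcal O)}$ of all linearly ordered edge-labelled graphs omitting the obstacles; and second, the completion hypothesis, which lets us turn a Ramsey witness living in $\overrightarrow{\Forb_\delta(\mathcal O)}$ into a genuine metric space in $\mathcal A$ while keeping all the relevant copies intact. Because $\mathcal A\subseteq\Forb_\delta(\mathcal O)$ and the objects of $\mathcal A$ are complete, any $\str A,\str B\in\overrightarrow{\mathcal A}$ may simultaneously be regarded as objects of $\overrightarrow{\Forb_\delta(\mathcal O)}$, so it makes sense to run the Ramsey argument there and then pull the conclusion back.

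The first step is to prove that $\overrightarrow{\Forb_\delta(\mathcal O)}$ is Ramsey. An edge-labelled graph is simply a relational structure over the finite language $\{R_1,\dots,R_\delta\}$, where $R_i$ records the pairs at distance $i$, and membership in $\Forb_\delta(\mathcal O)$ means admitting no homomorphism from any member of the \emph{finite} family $\mathcal O$. I would establish the Ramsey property by the partite construction of Ne\v set\v ril and R\"odl~\cite{Nevsetvril1976}, taking the Ramsey theorem for linearly ordered structures without constraints~\cite{Nevsetvril1976,Abramson1978} as the underlying engine. One builds the target as an iterated partite system and repeatedly applies a partite lemma to kill bad colourings; the delicate point is to verify that no stage of the construction ever creates a homomorphic image of an obstacle, so that the whole tower stays inside $\Forb_\delta(\mathcal O)$. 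The fact that the obstacles are forbidden as \emph{homomorphic} images rather than as embeddings is exactly what makes this harder than the classical case, since one must control identifications of vertices throughout the amalgamations; finiteness of $\mathcal O$ and of the language is what lets this bookkeeping terminate.

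The second step transfers the property across the completion. Given $\str A,\str B\in\overrightarrow{\mathcal A}$ and a number of colours $k$, let $\str C'\in\overrightarrow{\Forb_\delta(\mathcal O)}$ be the Ramsey witness from the first step, and let $\str C$ be a completion of $\str C'$ lying in $\mathcal A$, furnished by the obstacle hypothesis and taken on the same vertex set so that the linear order is retained. The crucial observation is that $\str A$ and $\str B$ are \emph{complete}: all distances among their vertices are already defined, and since a completion is a homomorphism it preserves every defined distance and only fills in previously undefined ones. Hence any embedding of $\str A$ (or $\str B$) into $\str C'$ is still an embedding into $\str C$, giving a natural injection ${\str C'\choose\str A}\hookrightarrow{\str C\choose\str A}$. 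Now, given any $k$-colouring of ${\str C\choose\str A}$, pull it back to a colouring of ${\str C'\choose\str A}$ and apply the Ramsey property of $\str C'$ to obtain $\widetilde{\str B}\in{\str C'\choose\str B}$ with ${\widetilde{\str B}\choose\str A}$ monochromatic. As $\str B$ is complete, $\widetilde{\str B}$ remains a copy of $\str B$ in $\str C$, and as $\str A$ is complete the set ${\widetilde{\str B}\choose\str A}$ computed inside $\str C$ coincides with the one computed inside $\str C'$ — completion adds no new copies of $\str A$ inside the already-complete $\widetilde{\str B}$ — so $\widetilde{\str B}$ witnesses the Ramsey property in $\str C$.

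The role of \emph{strong} amalgamation is to make this completion step legitimate: it guarantees that the completion can be chosen injective, so that distinct vertices are never accidentally identified, and that $\mathcal A$ together with its completions forms a \Fraisse{} class whose metric completions interact consistently with embeddings, which is precisely what the transfer argument requires. I expect the genuine difficulty to lie entirely in the first step. The partite construction in the order-only setting is classical, but controlling the forbidden homomorphic images throughout the iteration — ensuring that the obstacles are never inadvertently completed into existence, and handling the non-injective maps that homomorphism-forbidding allows — is the technical heart of the argument and the place where the interplay between $\mathcal O$, the diameter $\delta$, and the linear ordering must be handled with care.
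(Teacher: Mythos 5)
The theorem is quoted in the paper from \cite{Hubicka2016} without proof, so the comparison here is against the proof in that reference. Your second step is sound: pulling a colouring of $\binom{\str{C}}{\str{A}}$ back to $\binom{\str{C}'}{\str{A}}$ and using the completeness of $\str{A}$ and $\str{B}$ to see that the completion neither destroys the copies in $\str{C}'$ nor creates new copies of $\str{A}$ inside the already complete $\widetilde{\str{B}}$ is exactly the right transfer argument. The fatal gap is your first step: the intermediate class $\overrightarrow{\Forb_{\delta}(\mathcal O)}$ of ordered, possibly incomplete, edge-labelled graphs omitting homomorphic images of the obstacles is \emph{not} a Ramsey class, so the witness $\str{C}'$ you want to complete need not exist. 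The reason is that $\Forb_{\delta}(\mathcal O)$ fails the amalgamation property whenever $\mathcal O$ contains a cycle on four or more vertices, which is always the case here; for instance, for $\mathcal A^6_{2,15}$ the paper lists the non-completable $4$-cycle with consecutive labels $1,1,1,6$. Split that cycle $v_1v_2v_3v_4$ over $\str{A}=\{v_1,v_3\}$ (no edge) into the paths $\str{B}_1=v_1\hbox{--}v_2\hbox{--}v_3$ with labels $1,1$ and $\str{B}_2=v_3\hbox{--}v_4\hbox{--}v_1$ with labels $1,6$. Each path is completable, hence lies in $\Forb_{\delta}(\mathcal O)$, but \emph{every} amalgam of $\str{B}_1$ and $\str{B}_2$ over $\str{A}$ contains all four labelled edges of the forbidden cycle and therefore admits a homomorphism from an obstacle, no matter what is placed on the cross-pair $v_2,v_4$. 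Since a hereditary class of rigid (here: linearly ordered) structures with joint embedding that is Ramsey must have the amalgamation property (Ne\v set\v ril's observation, recalled in the introduction), $\overrightarrow{\Forb_{\delta}(\mathcal O)}$ cannot be Ramsey. This is precisely where forbidding homomorphic images of cycles is genuinely harder than forbidding irreducible substructures: the partite construction cannot be confined to $\Forb_{\delta}(\mathcal O)$, because even its free amalgamation steps leave that class.

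The proof in \cite{Hubicka2016} avoids this by never asking the intermediate class to be Ramsey. The partite construction is run inside the unconstrained Ramsey class of \emph{all} ordered edge-labelled graphs, over a completable base $\str{C}_0$, and obstacle-freeness is established a posteriori for the one structure that matters: one shows by induction along the construction (using that every partite system projects homomorphically onto $\str{C}_0\in\mathcal A$ and that obstacles are connected and of bounded size --- this is where finiteness of $\mathcal O$ enters, as a local finiteness condition) that no homomorphic image of an obstacle is ever created, so the final structure has an $\mathcal A$-completion; strong amalgamation is what makes the completion compatible with the copies of $\str{A}$ and $\str{B}$. So the two ingredients you isolated are the right ones, but they cannot be decoupled into ``Ramseyness of $\Forb_{\delta}(\mathcal O)$'' plus ``completion'': the completability analysis must be interleaved with the partite construction itself.
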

 In the course of both proofs, incomplete edge-labelled graphs are produced.
Knowing the characterisation of obstacles and a completion algorithm, it is then
possible to turn such a graph into a metric space in the given class.

\subsection{The catalogue of metrically homogeneous graphs}\label{sec:homogen}
It is rather special for a class of structures to have a finite set of obstacles and a successful completion algorithm. Fortunately there
is an elaborate list of candidates which can be examined.
A weaker notion of completion, known as strong amalgamation, is well studied in the
context of \Fraisse{} theory. 

\begin{figure}
\centering
  \includegraphics{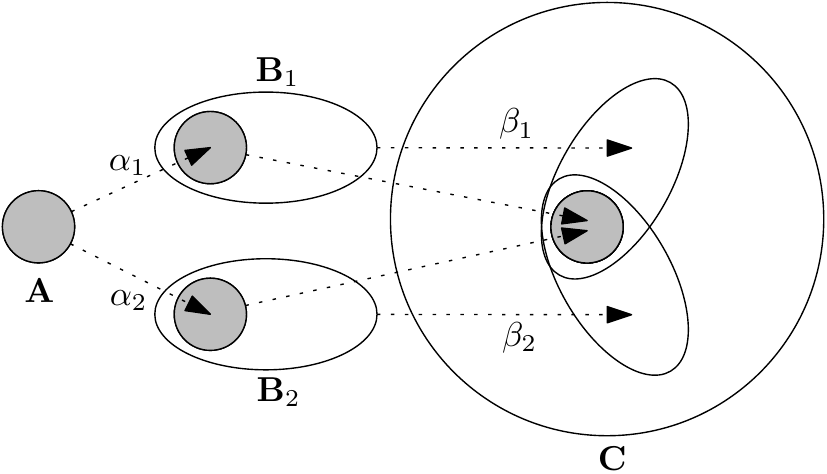}
\caption{An amalgamation of $\str{B}_1$ and $\str{B}_2$ over $\str{A}$.}
\label{amalgamfig}
\end{figure}%
Let $\str{A}$, $\str{B}_1$ and $\str{B}_2$ be edge-labelled graphs and $\alpha_1$ an embedding of $\str{A}$
into $\str{B}_1$, $\alpha_2$ an embedding of $\str{A}$ into $\str{B}_2$, then
every edge-labelled graph $\str{C}$
 with embeddings $\beta_1\colon\str{B}_1 \to \str{C}$ and
$\beta_2\colon\str{B}_2\to\str{C}$ such that $\beta_1\circ\alpha_1 =
\beta_2\circ\alpha_2$ is called an \emph{amalgamation} of $\str{B}_1$ and $\str{B}_2$ over $\str{A}$ with respect to $\alpha_1$ and $\alpha_2$. See Figure~\ref{amalgamfig}.

An \emph{amalgamation class} is a class $\K$ of finite edge-labelled graphs satisfying the following three conditions:
\begin{description}
\item[Hereditary property] For every $\str{A}\in \K$ and a subgraph $\str{B}$ of $\str{A}$ we have $\str{B}\in \K$;
\item[Joint embedding property] For every $\str{A}, \str{B}\in \K$ there exists $\str{C}\in \K$ such that $\str{C}$ contains both $\str{A}$ and $\str{B}$ as subgraphs;
\item[Amalgamation property]
For $\str{A},\str{B}_1,\str{B}_2\in \K$ and $\alpha_1$ an embedding of $\str{A}$ into $\str{B}_1$, $\alpha_2$ an embedding of $\str{A}$ into $\str{B}_2$, there is a $\str{C}\in \K$ which is an amalgamation of $\str{B}_1$ and $\str{B}_2$ over $\str{A}$ with respect to $\alpha_1$ and $\alpha_2$.
\end{description}

We say that an amalgamation is \emph{strong} when $\beta_1(x_1)=\beta_2(x_2)$ if and
only if $x_1\in \alpha_1(A)$ and $x_2\in \alpha_2(A)$.  Less formally, a strong
amalgamation glues together $\str{B}_1$ and $\str{B}_2$ with an overlap no
greater than the copy of $\str{A}$ itself. 

Classes with the amalgamation property give rise to homogeneous structures. Many
examples are provided by a well known classification programme
(see~\cite{Cherlin2013} for references). Every such class is a potential
candidate to be a Ramsey class, or a class having EPPA.  Cherlin recently extended
this list by a probably complete classification of classes with metrically homogeneous graphs as limits, where the 
$\mathcal A^\delta_{K,C}$ play an important \role{}.

From our perspective they are particularly interesting because they give a condition on the
largest perimeter of triangles. The shortest path completion typically violates this axiom and thus a new completion algorithm needs to be given.
While an amalgamation procedure is given in~\cite{Cherlin2013} it does not directly
generalise to a completion algorithm.

\section{Generalised completion algorithm}
\label{sec:algorithm}
In this section we will work with fixed acceptable parameters $\delta$, $K$ and $C$.  Put $\mathcal D=\{1,2,\dots \delta\}^2$. We will refer to elements of $\mathcal D$ as {\em forks}.

Consider a $\delta$-bounded variant of shortest path completion, where in the input graphs there are no distances greater than $\delta$ and in the output all edges longer than $\delta$ are replaced by an edge of that length. There is an alternative formulation of this completion: for a fork $\vec{f}=(a,b)$ define $d^+(\vec{f})=\min(a+b,\delta)$. In the $i$th step look at all incomplete forks $\vec{f}$ (i.e. triples of vertices $u,v,w$ such that exactly two edges are present) such that $d^+(\vec{f}) = i$ and define the length of the missing edge to be $i$.

This algorithm proceeds by first adding edges of length 2, then edges of length 3 and so on up to edges of length $\delta$ and has the property that out of all metric completions of a given graph, every edge of the completion yielded by this algorithm is as close to $\delta$ as possible.

It makes sense to ask what if, instead of trying to make each edge as close to $\delta$ as possible, one would try to make each edge as close to some parameter $M$ as possible. And for $M$ in certain range, it is indeed possible (made precise in Lemma \ref{lem:bestcompletion}). For each fork $\vec{f}=(a,b)$ one can define $d^+(\vec{f}) = a+b$ and $d^-(\vec{f}) = |a-b|$, i.e. the largest and the smallest possible distance that can metrically complete fork $\vec{f}$. The generalised algorithm will then complete $\vec{f}$ by $d^+(\vec{f})$ if $d^+(\vec{f})<M$, $d^-(\vec{f})$ if $d^-(\vec{f})>M$ and $M$ otherwise. It turns out that there is a good permutation $\pi$ of $\{1,\dots,\delta\}$, such that if one adds the distances in order given by the permutation, the generalised algorithm will indeed give a correct completion whenever possible. It is easy to check that the choice $M=\delta$ and $\pi=\{1,2,\dots,\delta\}$ gives exactly the shortest path completion algorithm.

In the following paragraphs we will properly state this idea, introduce some more rules in order to deal with the $C$ bound from the definition of $\mathcal A^{\delta}_{K,C}$ and prove that the algorithm works correctly.

\begin{definition}[Completion algorithm]\label{defn:ftmcompletion}
Given $c\geq 1$, $\mathcal F\subseteq \mathcal D$, and
an edge-labelled graph with distances at most $\delta$, we say that $\overbar{\str{G}}=(G,\bar{d})$ is the 
{\em $(\mathcal F,c)$-completion} of $\str{G}$ if the following hold.
\begin{enumerate}
\item If $u,v$ is an edge of $\str{G}$ it holds that $\bar{d}(u,v)=d(u,v)$.
\item If $u,v$ is not an edge of $\str{G}$ and there exist $(a,b)\in \mathcal F$ and $w\in G$ such that $\{d(u,w),d(v,w)\}=\{a,b\}$, we have that $\bar{d}(u,v)=c$.
\item There are no other edges in $\overbar{\str{G}}$.
\end{enumerate}

Given $1\leq M\leq \delta$, a one-to-one function $t\colon\{1,2,\ldots,\delta\}\setminus \{M\}\to \mathbb N$ 
and a function $\mathbb F$ from $\{1,2,\ldots,\delta\}\setminus \{M\}$ to the power set of $\mathcal D$, we define the {\em $(\mathbb F,t,M)$-completion} of  $\str{G}$
as the limit of a sequence of edge-labelled graphs  $\str{G}_1, \str{G}_2,\ldots$ such that $\str{G}_1=\str{G}$ and $\str{G}_{k+1}=\str{G}_k$ if $t^{-1}(k)$ is undefined
and $\str{G}_{k+1}$ is the $(\mathbb F(t^{-1}(k)),t^{-1}(k))$-completion of $\str{G}_{k}$ otherwise, with every pair of vertices not forming an edge in this limit set to distance $M$.
\end{definition}

We will call the vertex $w$ from Definition \ref{defn:ftmcompletion} the {\em witness of the edge $u,v$}. The function $t$ is called the {\em time function} of the completion because edges of length $a$ are inserted to $\str{G}_{t(a)}$ the $t(a)$-th step of the completion. If for a $(\mathbb F, t, M)$-completion and distances $a,c$ there is a distance $b$ such that $(a,b)\in \mathbb F(c)$ (i.e. the algorithm might complete a fork $(a,b)$ with distance $c$), we say that {\em $c$ depends on $a$}.

In the following paragraphs the letters $u,v,w$ will denote vertices and the letters $a,b,c$ will denote (lengths of) edges. We will slightly abuse notation and use the term triangle for both triples of vertices $u,v,w$ and for triples of edges $a,b,c$ with $a = d(u,v)$, $b = d(v,w)$, $c = d(u,w)$. The same convention will be used for forks.

\begin{definition}[Magic distances]
Let $M\in\{1,2,\dots, \delta\}$ be a distance. We say that $M$ is {\em magic} (with respect to $\mathcal A^\delta_{K,C}$) if $$\max\left(K, \left\lceil\frac{\delta}{2}\right\rceil\right) \leq M \leq \left\lfloor\frac{C-\delta-1}{2}\right\rfloor.$$
\end{definition}
When the parameters are acceptable, such an $M$ will exist.

\begin{observation}\label{obs:magicismagic}
The set of magic distances (for a given $\mathcal A^\delta_{K,C}$) is exactly the set $$S=\left\{1\leq a \leq \delta : \text{triangle }aab\text{ is allowed for all }1\leq b\leq \delta\right\}.$$
\end{observation}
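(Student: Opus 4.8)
The plan is to unpack the membership condition for a triangle with two sides of length $a$ and one side of length $b$ directly from the three defining axioms of $\mathcal A^\delta_{K,C}$, and to isolate, for each axiom, the value of $b \in \{1,\dots,\delta\}$ that is hardest to satisfy. Since $\mathcal A^\delta_{K,C}$ consists of metric spaces, a triangle $aab$ is allowed precisely when (1) the triangle inequality holds, (2) its perimeter $2a+b$ is strictly less than $C$, and (3) whenever $2a+b$ is odd it is at least $2K+1$. I would then demand each of these for every $b$ with $1 \leq b \leq \delta$ and read off the resulting constraints on $a$; since each reduction below is an equivalence, the conjunction of the constraints will characterise $S$ exactly.

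First I would treat the triangle inequality. For sides $a,a,b$ the only nontrivial instance is $b \leq 2a$, and demanding it for all $b$ up to $\delta$ is equivalent to $\delta \leq 2a$, i.e.\ $a \geq \lceil \delta/2 \rceil$. Next, the perimeter bound $2a+b<C$ is tightest at the largest admissible $b$, so requiring it for all $b \leq \delta$ is equivalent to $2a+\delta<C$; as all quantities are integers this rearranges to $a \leq \lfloor (C-\delta-1)/2 \rfloor$. Finally, $2a+b$ is odd exactly when $b$ is odd, and since $2a+b$ grows with $b$ the binding instance is the smallest odd value $b=1$, so the condition $2a+1 \geq 2K+1$ reduces to $a \geq K$.

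Combining the three derived constraints yields $\max(K,\lceil \delta/2 \rceil) \leq a \leq \lfloor (C-\delta-1)/2 \rfloor$, which is precisely the definition of $a$ being magic; as each step was an equivalence, this gives $a \in S$ if and only if $a$ is magic, proving the observation. I do not expect a serious obstacle, as the argument is a short case analysis, but the one point needing care is that the three axioms are optimised at different extremes of $b$ --- the triangle inequality and the perimeter bound at $b=\delta$, the odd-perimeter axiom at $b=1$ --- so the extremal $b$ must be chosen separately for each. I would also remark that the $a \geq K$ bound is genuinely forced only because an odd $b$ exists in range, which holds since $\delta \geq 2$ for acceptable parameters.
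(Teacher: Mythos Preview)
Your proof is correct and follows essentially the same approach as the paper: both test the triangle $aab$ against each of the three defining constraints, pick the extremal value of $b$ (namely $b=\delta$ for the metric and $C$-bound conditions, $b=1$ for the $K$-bound), and read off the resulting inequalities on $a$. The paper is terser, proving only the direction $a\in S\Rightarrow a$ magic explicitly and dismissing the converse as immediate from the definition, whereas you carry through each step as an equivalence; but the underlying argument is the same.
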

\begin{proof}
If a distance $a$ is in $S$, then $a\geq K$ (because otherwise the triangle $aa1$ is forbidden by the $K$ bound), $a\geq \left\lceil\frac{\delta}{2}\right\rceil$ (because otherwise $aa\delta$ is non-metric) and $a\leq \left\lfloor\frac{C-\delta-1}{2}\right\rfloor$ (because otherwise $aa\delta$ is forbidden by the $C$ bound). The other implication follows from the definition of $\mathcal A^\delta_{K,C}$.
\end{proof}

An implementation in Sage of the following completion algorithm is available at~\cite{PawliukSage}.

Let $M$ be a magic distance and $1\leq x\leq \delta$ with $x\neq M$. Define 
\[
\begin{split}
\mathcal F^+_x &= \left\{(a,b)\in \mathcal D : a+b=x\right\}\\ 
\mathcal F^-_x &= \left\{(a,b)\in \mathcal D : |a-b|=x\right\}
\end{split}
\]
and $$\mathcal F^C_x = \{(a,b)\in \mathcal D : C-1-a-b=x\}.$$ We further define
$$\mathbb F_M(x) =
\begin{cases} 
      \mathcal F^+_x\cup \mathcal F^C_x & \text{if }x < M \\
      \mathcal F^-_x & \text{if }x > M.
\end{cases}
$$
For a magic distance $M$, we define the function $t_M\colon \{1,\dots,\delta\}\setminus \{M\} \rightarrow \mathbb N$ as
$$t_M(x) =
\begin{cases} 
      2x-1 & \text{if } x < M \\
      2(\delta-x) & \text{if }x > M.
\end{cases}
$$
Forks and how they are completed according to $\mathbb F_M$ are schematically depicted in Figure~\ref{fig:Fforks}.
\begin{figure}
\centering
\includegraphics{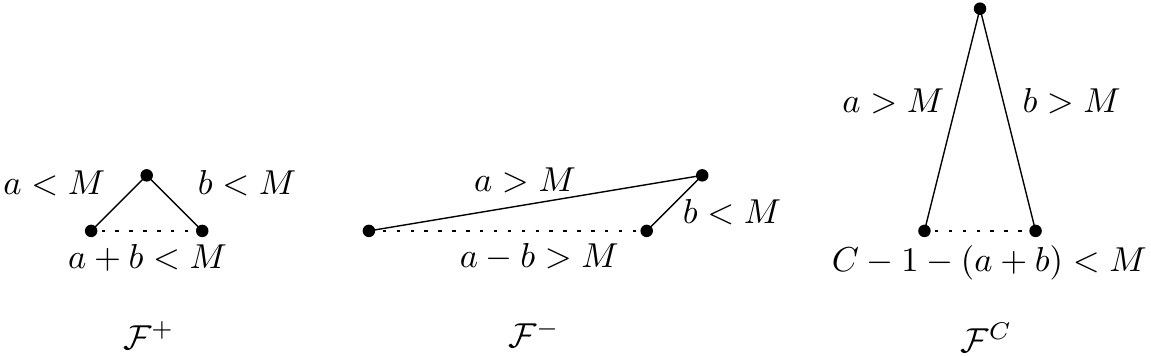}
\caption{Forks used by $\mathbb F_M$.}
\label{fig:Fforks}
\end{figure}

\begin{definition}[Completion with magic parameter $M$]
Let $M$ be a ma\-gic distance. We then call the $(\mathbb F_M,t_M,M)$-completion (of $\str{G}$) the {\em completion (of $\str{G}$) with magic parameter $M$}.
 We also use the name {\em completion algorithm with magic parameter $M$} to emphasise the process of iteratively adding distances.
\end{definition}
The
interplay of individual parameters of algorithm is schematically depicted in
Figure~\ref{fig:algorithm}.
\begin{figure}
\centering
\includegraphics{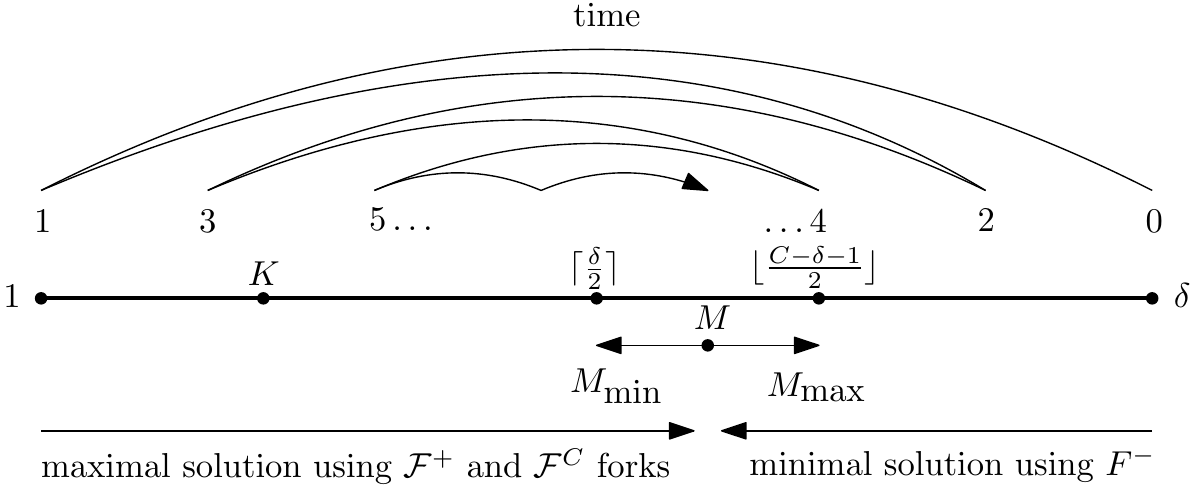}
\caption{A sketch of the main parameters of the completion algorithm.}
\label{fig:algorithm}
\end{figure}%
\begin{figure}
\centering
\includegraphics{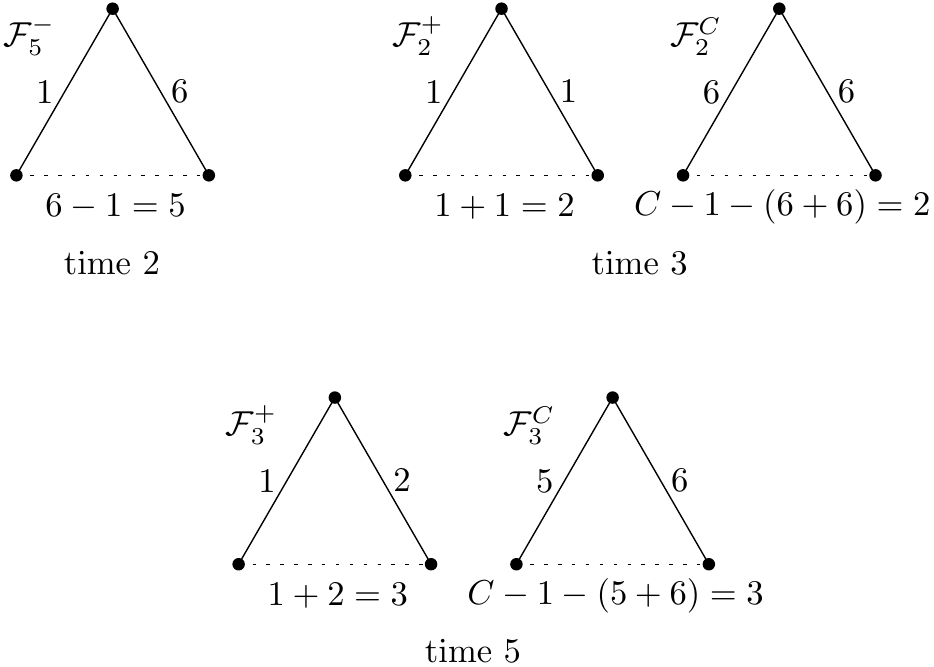}
\caption{Forks considered by the algorithm to complete to $\mathcal A^6_{2,15}$ with $M=4$.}
\label{fig:forks}
\end{figure}%
\begin{example}\rm
Consider $\delta=6, K=2, C=15$. Here $M$ can be chosen 3 or 4. We put $M=4$

Forbidden triangles are those that are non-metric (113, 114, 115, 116, 124, 125, 126, 135, 136, 146, 225, 226, 236), or rejected for the $K$-bound (111), or the $C$-bound (366, 466, 456, 555, 556, 566, 666).
\begin{table}[t]
\centering
\begin{tabular}{|c|c|c|c|c|c|c|}
\hline
&$j=1$&$j=2$&$j=3$&$j=4$&$j=5$ & $j=6$ \\ \hline
$i=1$&\textbf{2}&  $1,2,\textbf{3}$  &  $2,3,\textbf{4}$      & $3,\textbf{4},5$   		 & $\textbf{4},5,6$	   & $\textbf{5},6$  \\ 
$i=2$&					&  $1,2,3,\textbf{4}$&  $1,2,3,\textbf{4},5$  & $2,3,\textbf{4},5$ 		 & $3,\textbf{4},5,6$  & $\textbf{4},5,6$\\
$i=3$&					&  									 &  $1,2,3,\textbf{4},5,6$& $1,2,3,\textbf{4},5,6$ & $2,3,\textbf{4},5,6$& $3,\textbf{4},5$\\ 
$i=4$&					&  									 &  											&  $1,2,3,\textbf{4},5,6$& $1,2,3,\textbf{4},5$& $2,3,\textbf{4}$\\ 
$i=5$&					&  									 &  											&  									     & $1,2,3,\textbf{4}$  & $1,2,\textbf{3}$\\
$i=6$&					&  									 &  											&  											 &										 & $1, \textbf{2}$ \\ \hline
\end{tabular}
\caption{Possible ways to complete $(i,j)$ forks, the bold number is the completion with magic parameter $M = 4$.}
\label{tab:forks}
\end{table}
Table~\ref{tab:forks} lists all possible completions of forks, with the completion preferred by our algorithm in bold type. Completions for forks in this class are depicted in Figure~\ref{fig:forks}.
Those cases are the only forks where $M=4$ cannot be chosen, so instead the algorithm chooses the nearest possible completion. 

The algorithm will thus effectively run in four steps.
\begin{enumerate}
\item  At time 2 it will complete all forks $(1,6)$ with distance 5.
\item  At time 3 it will complete all forks $(1,1)$ and $(6,6)$ with distance 2.
\item  At time 5 it will complete all forks $(1,2)$ and $(5,6)$ with distance 3.
\item  Finally it will turn all non-edges into edges of distance 4.
\end{enumerate}
\begin{figure}
\centering
\includegraphics{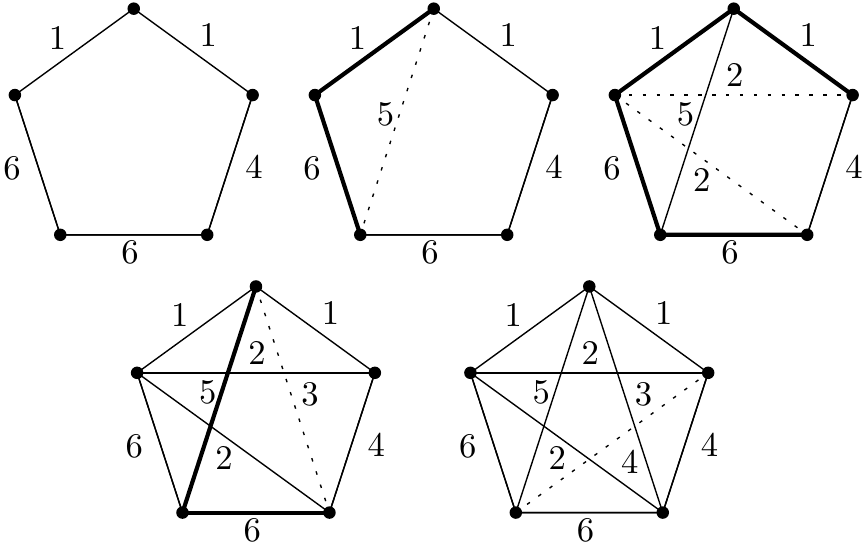}
\caption{Example of run of the algorithm.}
\label{fig:example}
\end{figure}%
 An example of runs of this algorithm is given in Figure~\ref{fig:example} and~\ref{fig:11665}.
\end{example}

The class $\mathcal A^\delta_{K,C}$ is given by forbidding those triangles with distances in $\{1, \ldots, \delta\}$ that satisfy one of the following conditions:
the non-metric condition (i.e. $abc$ is forbidden if $a+b<c$),
the $K$-bound condition ($a+b+c < 2K+1$ and $a+b+c$ is odd) and
the $C$-bound condition ($a+b+c \geq C$).
In our proof we will often consider these three classes of forbidden triangles separately. In the following we study how they are related to the magic parameter $M$. We will use $a,b,c$ for the lengths of the sides of the triangle and without loss of generality assume $a\leq b\leq c$. All conclusions are summarised in Figure~\ref{fig:Ftriangles}.
\begin{figure}
\centering
\includegraphics{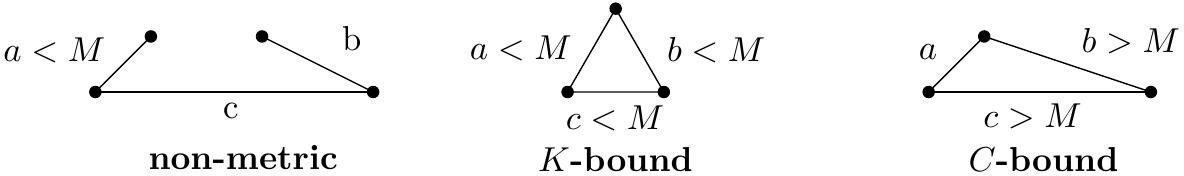}
\caption{Types of forbidden triangles.}
\label{fig:Ftriangles}
\end{figure}%
\begin{description}
\item[Non-metric] If $a+b<c$, then $a < M$, because otherwise $a+b\geq 2M \geq \delta$.

\item [$K$-bound] If $a+b+c < 2K+1$ and $a+b+c$ is odd and the triangle $abc$ is metric, then $a,b,c < K\leq M$, because if $c\geq K$, then from the metric condition $a+b\geq c\geq K$ and hence $a+b+c\geq 2K$, for odd $a+b+c$ this means $a+b+c\geq 2K+1$.

\item [$C$-bound]If $a+b+c\geq C$ then $b,c > M$. Suppose for a contradiction that $a,b\leq M$. We then have $a+b\geq C-c\geq C-\delta$, but on the other hand $a+b\leq 2M\leq 2\left\lfloor \frac{C-\delta-1}{2} \right\rfloor\leq C-\delta-1$, which together yield $C-\delta-1\geq C-\delta$, a contradiction.
\end{description}

Now we shall precisely state and prove that $t_M$ gives a suitable injection for the algorithm, as stated at the beginning of this section. The intuition behind the notion of ``$a$ depends on $b$" and the time function is that we wish to introduce edges to complete forks in a way that minimally reduces the options for subsequent forks.

\begin{lemma}[Time Consistency Lemma]\label{lem:expandtime}
Let $a,b$ be distances different from $M$. If $a$ depends on $b$, then $t_M(a) > t_M(b)$.
\end{lemma}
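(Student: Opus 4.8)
The plan is to unfold the definition of dependence and reduce the statement to a short case analysis governed by which family witnesses the dependence. To say that $a$ depends on $b$ means there is a distance $e$ with $(b,e)\in\mathbb F_M(a)$. By the definition of $\mathbb F_M$, this splits into two regimes: if $a<M$ then $(b,e)\in\mathcal F^+_a\cup\mathcal F^C_a$, while if $a>M$ then $(b,e)\in\mathcal F^-_a$. The backbone of every case is the elementary observation that $t_M$ is strictly increasing on $\{1,\dots,M-1\}$, where $t_M(x)=2x-1$, and strictly decreasing on $\{M+1,\dots,\delta\}$, where $t_M(x)=2(\delta-x)$; moreover the two sides take values of opposite parity, so a tie can never occur. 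Thus once I pin down on which side of $M$ the distance $b$ lies, the inequality $t_M(a)>t_M(b)$ reduces to a direct computation, and the hypothesis $b\neq M$ guarantees $b$ always falls strictly on one side.

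The two ``same-side'' cases are immediate. If $(b,e)\in\mathcal F^+_a$ then $b+e=a$ with $e\ge 1$, so $b<a<M$ and monotonicity on the lower side gives $t_M(b)=2b-1<2a-1=t_M(a)$. If $(b,e)\in\mathcal F^-_a$ with $b>e$, so that $b-e=a$, then $b>a>M$ and monotonicity on the upper side gives $t_M(b)=2(\delta-b)<2(\delta-a)=t_M(a)$.

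The substance lies in the two ``cross-over'' cases, where $b$ sits on the opposite side of $M$ from $a$; locating $b$ is exactly where the acceptability and magic-distance inequalities enter. For the $\mathcal F^C$ case ($a<M$, so $a+b+e=C-1$) I would first show $b>M$: if instead $b\le M$ then, using $a\le M-1$ and $2M\le C-\delta-1$, one gets $e=C-1-a-b\ge C-2M\ge\delta+1$, contradicting $e\le\delta$. Knowing $b>M$, it remains to verify $t_M(a)=2a-1>2(\delta-b)=t_M(b)$, i.e. $a+b\ge\delta+1$; this follows from $a+b=C-1-e\ge C-1-\delta$ together with $C\ge 2\delta+2$ (forced by $C>2\delta+K\ge 2\delta+1$). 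Symmetrically, for the remaining $\mathcal F^-$ case ($a>M$ and $b<e$, whence $e=a+b$) I would first rule out $b>M$: since $M\ge\lceil\delta/2\rceil$, having $a,b>M$ would give $a+b>2M\ge\delta\ge e=a+b$, a contradiction, so $b<M$; then $t_M(a)=2(\delta-a)\ge 2b>2b-1=t_M(b)$, using $a+b=e\le\delta$.

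The main obstacle, and the only place where the argument is not purely formal, is the $\mathcal F^C$ cross-over case: it is the rule introduced specifically to respect the $C$-bound, and verifying it requires combining the upper bound $2M\le C-\delta-1$ on the magic distance (to force $b$ onto the far side of $M$) with the lower bound $C\ge 2\delta+2$ coming from acceptability (to secure $a+b\ge\delta+1$). Once these two inequalities are in hand, the monotonicity and parity structure of $t_M$ closes every case with no boundary situation left over.
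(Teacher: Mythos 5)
Your proof is correct and follows essentially the same route as the paper's: the same three-way case split by fork type ($\mathcal F^+$, $\mathcal F^C$, $\mathcal F^-$), with the two same-side cases handled by monotonicity of $t_M$ and the two cross-over cases resolved by the same inequalities $2M\leq C-\delta-1$, $C\geq 2\delta+2$ and $M\geq\lceil\delta/2\rceil$. The only differences are cosmetic rearrangements of the arithmetic (e.g., deriving $e\geq\delta+1$ rather than sandwiching $a+b$ between two copies of $C-\delta-1$).
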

\begin{proof}
We consider three types of forks used by the algorithm:
\begin{description}
\item[$\mathcal F^+$ forks]
If $a<M$ and $\mathcal F^+_a\neq\emptyset$, then $b<a<M$, hence $t_M(b) < t_M(a)$.

\item[$\mathcal F^C$ forks]
If $a < M$ and $\mathcal F^C_a\neq\emptyset$, then we must have $b,c > M$. Otherwise, if for instance $b\leq M$, then $C-\delta-1\leq C-1-c = a+b < 2M \leq 2\left\lfloor \frac{C-\delta-1}{2} \right\rfloor$, a contradiction. As $C\geq 2\delta+2$, we obtain the inequality $b = (C-1)-c-a \geq (2\delta + 1) - \delta - a = \delta+1-a$. Hence $t_M(b) \leq 2(a-1) < 2a-1 = t_M(a)$.

\item[$\mathcal F^-$ forks]
Finally, we consider the case where $a > M$ and $\mathcal F^-_a\neq\emptyset$. Then either $a = b-c$, which implies $b>a>M$ and thus $t_M(b)<t_M(a)$, or $a = c-b$, which means $b = c-a\leq \delta-a$. Because of $a>M\geq \left\lceil\frac{\delta}{2}\right\rceil$, we have $b<M$. So $t_M(b) \leq 2(\delta-a) - 1 < 2(\delta-a) = t_M(a)$.
\end{description}
\end{proof}

The families $\mathbb F_M$ were chosen to include all forks that cannot be completed with $M$:

\begin{lemma}[$\mathbb F_M$ Completeness Lemma]\label{lem:misgood}
Let $\str{G}$ be an edge-labelled graph and $\overbar{\str{G}}$ be its completion with magic parameter $M$. If there is a forbidden triangle (w.r.t. $\mathcal A^\delta_{K,C}$) in $\overbar{\str{G}}$ with an edge of length $M$, then this edge was already in $\str{G}$.
\end{lemma}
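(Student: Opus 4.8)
The plan is to argue by contradiction. Suppose $\overbar{\str{G}}$ contains a forbidden triangle on vertices $u,v,w$ whose edge $e=\{u,v\}$ has length $M$, and yet $e$ is not an edge of $\str{G}$. The completion with magic parameter $M$ assigns the distance $M$ only in its final step, when every remaining non-edge is set to $M$; all earlier stages insert edges whose length $t_M^{-1}(k)$ is different from $M$. Hence $e$ must have been created in that final step, so $e$ is not an edge of $\str{G}_k$ for any finite stage $k$. Writing $a\le b\le c$ for the three side lengths of the triangle, I will locate which side equals $M$ using the classification of forbidden triangles established above, exhibit the fork $\vec f=(d(u,w),d(v,w))$ formed by the two sides incident to $w$, and show that this fork would already have forced $e$ to be completed with a distance different from $M$ before the final step.

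First observe that the triangle cannot be $K$-bound forbidden, since then $a,b,c<K\le M$ and no side could equal $M$. So the triangle is non-metric or $C$-bound forbidden. If it is non-metric, then $a+b<c$ and $a<M$, so $e$ is the side $b$ or $c$. When $c=M$ the remaining sides satisfy $a+b<M$, hence $\vec f=(a,b)\in\mathcal F^+_{a+b}\subseteq\mathbb F_M(a+b)$ and its completion distance $a+b$ is below $M$. When $b=M$ the remaining sides are $a<M$ and $c>M$ with $c-a>M$, hence $\vec f=(a,c)\in\mathcal F^-_{c-a}=\mathbb F_M(c-a)$ with completion distance $c-a>M$. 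If instead the triangle is $C$-bound forbidden, then $b,c>M$, so $e$ is the side $a$; using $C\ge 2\delta+2$ and $b+c\ge C-M$ one checks $1\le C-1-b-c\le M-1$, so $\vec f=(b,c)\in\mathcal F^C_{C-1-b-c}\subseteq\mathbb F_M(C-1-b-c)$ with completion distance below $M$. In every case $\vec f$ lies in $\mathbb F_M(x)$ for some $x\neq M$, and the two edges $\{u,w\},\{v,w\}$ making up $\vec f$ have lengths different from $M$, so they were inserted strictly before the final step.

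It remains to push this to a contradiction via timing. Since $\vec f\in\mathbb F_M(x)$ and the families $\mathcal F^+,\mathcal F^-,\mathcal F^C$ are symmetric, $x$ depends on both lengths occurring in $\vec f$, so the Time Consistency Lemma (Lemma~\ref{lem:expandtime}) yields $t_M(x)>t_M(d(u,w))$ and $t_M(x)>t_M(d(v,w))$. Consequently both edges $\{u,w\}$ and $\{v,w\}$ are already present, with their final lengths, in $\str{G}_{t_M(x)}$, while $e$ is still a non-edge there. By rule (2) of the $(\mathbb F_M(x),x)$-completion, stage $t_M(x)$ therefore sets $\bar d(u,v)=x$, and this value is never changed afterwards; this contradicts $d(u,v)=M$ in $\overbar{\str{G}}$. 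The only real work is the middle step: matching each kind of forbidden triangle to the fork family that would have completed its $M$-side and verifying that the resulting completion distance is never equal to $M$. The timing that makes the witnessing fork available in time is delivered for free by the Time Consistency Lemma, so I expect no further obstacle there.
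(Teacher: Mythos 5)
Your proof is correct and follows essentially the same route as the paper's: classify the forbidden triangle as non-metric, $K$-bound or $C$-bound, observe that the two sides other than the $M$-edge form a fork in $\mathcal F^+$, $\mathcal F^-$ or $\mathcal F^C$ whose completion value differs from $M$, and invoke the Time Consistency Lemma to conclude the algorithm would have assigned that value before the final step. You spell out the fork membership and range checks (e.g.\ $1\le C-1-b-c\le M-1$) more explicitly than the paper does, but the underlying argument is the same.
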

\begin{proof}
By Observation \ref{obs:magicismagic} no triangle of type $aMM$ is forbidden, so suppose that there is a forbidden triangle $abM$ in $\overbar{\str{G}}$ such that the edge of length $M$ is not in $\str{G}$. For convenience define $t_M(M) = \infty$, which corresponds to the fact that edges of length $M$ are added in the last step.

\begin{description}
\item[Non-metric]
If $abM$ is non-metric then either $a+b<M$ or $|a-b|>M$. By Lemma \ref{lem:expandtime} we have in both cases that $t_M(a+b)$ (respectively, $t_M(|a-b|)$) is greater than $t_M(a)$ ($t_M(b)$). Therefore the completion algorithm would chose $a+b$, respectively $|a-b|$, as the length of the edge instead of $M$.

\item[$K$-bound]
Now that we know that $abM$ is metric, we also know that it is not forbidden by the $K$ bound, because $M\geq K$.

\item[$C$-bound]
If $abM$ is forbidden by the $C$ bound, then $t_M(C-1-a-b)>t_M(a),t_M(b)$ by Lemma \ref{lem:expandtime}, so the algorithm would set $C-1-a-b$ instead of $M$ as the length of the edge.
\end{description}
\end{proof}

The following lemma generalises the statement that the shortest path completion has all edges of maximum length possible. It will be the key ingredient for proving the correctness of the completion algorithm with magic parameter $M$.

\begin{lemma}[Optimality Lemma]\label{lem:bestcompletion}
Let $\str{G}=(G,d)$ be an edge-labelled graph such that there is a completion of $\str{G}$ into $\mathcal A^\delta_{K,C}$. Denote by $\overbar{\str{G}}=(G,\bar{d})$ the completion of $\str{G}$ with magic parameter $M$  and let $\str{G}'=(G,d')\in\mathcal A^\delta_{K,C}$ be an arbitrary completion of $\str{G}$. Then for every pair of vertices $u,v\in G$ either $d'(u,v) \geq \bar{d}(u,v) \geq M$ or $d'(u,v) \leq \bar{d}(u,v) \leq M$ holds.
\end{lemma}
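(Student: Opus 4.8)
The plan is to induct on the time $t_M(\bar d(u,v))$ at which the pair $u,v$ acquires its value in the completion algorithm, reading the conclusion as the single assertion that $\bar d(u,v)$ lies weakly between $M$ and $d'(u,v)$. Two cases are immediate and serve as the base. If $u,v$ is already an edge of $\str{G}$, then $\bar d(u,v)=d(u,v)=d'(u,v)$, since both $\overbar{\str{G}}$ and $\str{G}'$ complete $\str{G}$, and the conclusion holds whichever side of $M$ this common value lies on. If instead $\bar d(u,v)=M$, the value assigned in the final step, then the conclusion again holds trivially, because for any value of $d'(u,v)$ at least one of $d'(u,v)\geq M$ and $d'(u,v)\leq M$ is true.

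For the inductive step, assume $u,v$ is not an edge of $\str{G}$ and that $\bar d(u,v)=c\neq M$ was set at step $t_M(c)$ by a witness $w$ and a fork $(a,b)\in\mathbb F_M(c)$, so that $\{\bar d(u,w),\bar d(v,w)\}=\{a,b\}$. By the Time Consistency Lemma, $t_M(a),t_M(b)<t_M(c)$, so the edges $u,w$ and $v,w$ received their final values strictly earlier and the induction hypothesis applies to the pairs $(u,w)$ and $(v,w)$. (Observe that neither $a$ nor $b$ equals $M$ for any of the three families, so these pairs are genuinely earlier.) The argument then splits according to which family the fork belongs to.

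If $(a,b)\in\mathcal F^+_c$, then $c<M$ and $a+b=c$, so $a,b<M$; the induction hypothesis forces $d'(u,w)\leq a$ and $d'(v,w)\leq b$, and the triangle inequality in the metric space $\str{G}'$ yields $d'(u,v)\leq a+b=c$, whence $d'(u,v)\leq\bar d(u,v)\leq M$. If $(a,b)\in\mathcal F^C_c$, then $c<M$ and $a+b=C-1-c$; exactly as in the proof of the Time Consistency Lemma one checks that $a,b>M$, so the induction hypothesis forces $d'(u,w)\geq a$ and $d'(v,w)\geq b$, and now the $C$-bound satisfied by $\str{G}'$, applied to the triangle $u,v,w$, gives $d'(u,v)<C-a-b=c+1$, i.e. $d'(u,v)\leq c$ and again $d'(u,v)\leq\bar d(u,v)\leq M$. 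Finally, if $(a,b)\in\mathcal F^-_c$, then $c>M$ and $|a-b|=c$; labelling so that $\bar d(u,w)=a>M$ and $\bar d(v,w)=b<M$, which are the two sides of $M$ forced by $c>M\geq\lceil\delta/2\rceil$, the induction hypothesis gives $d'(u,w)\geq a$ and $d'(v,w)\leq b$, and the triangle inequality yields $d'(u,v)\geq d'(u,w)-d'(v,w)\geq a-b=c$, hence $d'(u,v)\geq\bar d(u,v)\geq M$.

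I expect the $\mathcal F^C$ case to be the crux. It is the only place where one must invoke the $C$-bound on the comparison completion $\str{G}'$ rather than mere metricity, and it relies on knowing that both fork edges strictly exceed $M$ (using $M\leq\lfloor(C-\delta-1)/2\rfloor$); this is precisely the feature that distinguishes the generalised algorithm from the ordinary shortest path completion, where $\mathcal F^C$ forks never arise. The remaining bookkeeping, namely that the induction is well founded via the Time Consistency Lemma and that fork edges never carry the value $M$, is routine.
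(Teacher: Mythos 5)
Your proof is correct and is essentially the paper's own argument: the paper phrases it as a minimal-counterexample over the steps of the completion sequence, which is exactly your induction on the time an edge receives its value, and the three-way case split by fork family, the side-of-$M$ determinations for the fork legs, and the use of metricity for $\mathcal F^+$/$\mathcal F^-$ versus the $C$-bound for $\mathcal F^C$ all coincide with the published proof.
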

\begin{proof}
Suppose that the statement is not true, and take any witness $\str{G}'=(G, d')$. Recall that the completion with magic parameter $M$ is defined as a limit of a sequence $\str{G}_1, \str{G}_2, \dots$ of edge-labelled graphs such that $\str{G}_1=\str{G}$ and each two subsequent graphs differ at most by adding edges of a single distance.

Take the smallest $i$ such that in the graph $\str{G}_i = (G,d_i)$ there are vertices $u,v\in G$ with $d_i(u,v) > M$ and $d_i(u,v) > d'(u,v)$ or $d_i(u,v) < M$ and $d_i(u,v) < d'(u,v)$. Let $w\in G$ be the witness of the edge $d_i(u,v)$.
We shall distinguish three cases, based on whether $d_{i}(u,v)$ was introduced by $\mathcal F^-$, $\mathcal F^+$ or $\mathcal F^C$:

\begin{description}
\item[$\mathcal F^-$ forks]
We have $M < d_i(u,v) = |d_{i-1}(u,w)-d_{i-1}(v,w)|$. Without loss of generality $d_{i-1}(u,w) > d_{i-1}(v,w)$, which means that $d_{i-1}(u,w) > M$ and $d_{i-1}(v,w) < M$ (as $M\geq \left\lceil\frac{\delta}{2}\right\rceil$). From the minimality of $i$ follows that $d'(u,w) \geq d_{i-1}(u,w)$ and $d'(v,w)\leq d_{i-1}(v,w)$. Since $\str{G'}$ is metric we have $d_i(u,v) = d_{i-1}(u,\allowbreak w)-d_{i-1}(v,w) \leq d'(u,w)-d'(v,w) \leq d'(u,v)$, which is a contradiction.

\item[$\mathcal F^+$ forks]
We have $M > d_i(u,v) = d_{i-1}(u,w)+d_{i-1}(v,w)$. Analogously to the first case we can show $d_{i-1}(u,w),\allowbreak d_{i-1}(v,\allowbreak w)<M$. By the minimality of $i$ we have $d'(u,w)\leq d_{i-1}(u,w)$ and $d'(v,w)\leq d_{i-1}(v,w)$. Since $\str{G'}$ is metric, we get $d'(u,v)\leq d_i(u,v)$, which contradicts to our assumptions.

\item[$\mathcal F^C$ forks]
We have $M > d_i(u,v) = C-1-d_{i-1}(u,w)-d_{i-1}(v,w)$. Recall that, by the acceptability of the parameters $\delta, K$ and $C$, we have $C-1\geq 2\delta+1$ and $M\leq \left\lfloor \frac{C-\delta-1}{2} \right\rfloor$. Thus we get $d_{i-1}(u,w),d_{i-1}(v,w)>M$ (otherwise, if, say, $d_{i-1}(u,w)\leq M$, we obtain the contradiction $C-\delta-1\geq 2M > d_{i-1}(u,w)+d_i(u,v) = C-1-d_{i-1}(v,w) \geq C-\delta-1$). So again $d'(u,w)\geq d_{i-1}(u,w)$ and $d'(v,w)\geq d_{i-1}(v,w)$, which means that the triangle $u,v,w$ in $\str{G}'$ is forbidden by the $C$ bound, which is absurd as $\str{G}'$ is a completion of $\str{G}$ in $\mathcal A^\delta_{K,C}$.
\end{description}
\end{proof}
\begin{lemma}[Automorphism Preservation Lemma]
\label{lem:aut}
For every metric graph $\str{G}$ and its completion with magic parameter $M$ (which we denote by $\str{M}$) it holds that
every automorphism of $\str{G}$ is also an automorphism of $\str{M}$.
\end{lemma}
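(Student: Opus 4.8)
The plan is to exploit the fact that the completion with magic parameter $M$ makes no arbitrary choices: at every stage the edges inserted are determined purely by the distances already present. Consequently any automorphism, preserving those distances, must preserve every inserted edge as well. Concretely, let $\phi$ be an automorphism of $\str{G}$; note that $\phi$ is in particular a bijection of the vertex set $G$, which is never enlarged during completion. Recall that $\str{M}$ is the limit of the sequence $\str{G}_1,\str{G}_2,\dots$ with $\str{G}_1=\str{G}$, where $\str{G}_{k+1}=(G,d_{k+1})$ is the $(\mathbb F_M(t_M^{-1}(k)),t_M^{-1}(k))$-completion of $\str{G}_k=(G,d_k)$ when $t_M^{-1}(k)$ is defined and equals $\str{G}_k$ otherwise, followed by assigning distance $M$ to every remaining non-edge. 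I would prove by induction on $k$ that $\phi$ is an automorphism of each $\str{G}_k$.

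The base case $k=1$ is exactly the hypothesis. For the induction step assume $\phi$ is an automorphism of $\str{G}_k$ and set $c=t_M^{-1}(k)$ (if this is undefined, then $\str{G}_{k+1}=\str{G}_k$ and there is nothing to do). By Definition~\ref{defn:ftmcompletion}, a non-edge $u,v$ of $\str{G}_k$ becomes an edge of length $c$ in $\str{G}_{k+1}$ precisely when some fork $(a,b)\in\mathbb F_M(c)$ and some witness $w$ satisfy $\{d_k(u,w),d_k(v,w)\}=\{a,b\}$. Since $\phi$ preserves all distances of $\str{G}_k$, we obtain $\{d_k(\phi(u),\phi(w)),d_k(\phi(v),\phi(w))\}=\{a,b\}$, so $\phi(w)$ witnesses that the non-edge pair $\phi(u),\phi(v)$ is also completed by distance $c$; conversely, bijectivity of $\phi$ lets any witness for $\phi(u),\phi(v)$ be pulled back along $\phi^{-1}$ to a witness for $u,v$. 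Thus $u,v$ receives the new edge of length $c$ if and only if $\phi(u),\phi(v)$ does, and since all edges inserted at step $k$ share the single length $c$ no conflict can arise. Combined with the inductive hypothesis, this shows $\phi$ is an automorphism of $\str{G}_{k+1}$.

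Finally, since the domain $\{1,\dots,\delta\}\setminus\{M\}$ of $t_M$ is finite, the sequence stabilises after finitely many steps, and the construction closes by setting every still-present non-edge to distance $M$. Because $\phi$ preserves edges and non-edges at each finite stage, it preserves the property of a pair remaining a non-edge to the end, and hence maps these last $M$-edges bijectively onto themselves. Therefore $\phi$ preserves $\bar d$ on all pairs of $\str{M}$ and, being a bijection of the unchanged vertex set, is an automorphism of $\str{M}$.

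I do not anticipate a serious obstacle: this lemma is essentially a statement that the algorithm is equivariant under automorphisms. The only point requiring genuine care is the use of the bijectivity of $\phi$ to transport the witness condition in both directions, which is exactly what underpins the ``if and only if'' in the induction step and thus guarantees that $\phi$ neither destroys nor spuriously creates edges at any stage.
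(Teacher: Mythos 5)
Your proposal is correct and follows essentially the same route as the paper: an induction on the stages $\str{G}_k$ of the completion, using the fact that each new edge is determined by existing distances and hence preserved by any distance-preserving bijection. Your write-up is merely more detailed (explicitly transporting witnesses in both directions via $\phi$ and $\phi^{-1}$, and treating the final assignment of $M$ to remaining non-edges), which fills in steps the paper declares to ``follow directly from the definition.''
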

\begin{proof}
Given $\str{G}$ and its automorphism $f\colon G\to G$, it can be verified by induction that for every $k>0$ $f$ is also an automorphism of graph $\str{G}_k$ as given in Definition~\ref{defn:ftmcompletion}.
That for every edge $x,y$ of $\str{G}_k$ which is not an edge of $\str{G}_{k-1}$ it holds that $f(x),f(y)$
is also an edge of $\str{G}_k$ which is not an edge of $\str{G}_{k-1}$ of the same distance. This follows
directly from the definition of $\str{G}_k$.
\end{proof}

In the next three lemmas we will use Lemma \ref{lem:bestcompletion} to show that an edge-labelled graph  $\str{G}$ has a completion into $\mathcal A^\delta_{K,C}$, if and only if the algorithm with magic parameter $M$ gives us such a completion. We will deal with each type of forbidden triangle separately, starting with the $C$ bound.

\begin{lemma}[$C$-bound Lemma]
Let $\str{G}=(G,d)$ be an edge-labelled graph such that there is a completion of $\str{G}$ into $\mathcal A^\delta_{K,C}$ and let $\overbar{\str{G}}=(G,\bar{d})$ be its completion with magic parameter $M$. Then there is no triangle forbidden by the $C$ bound in $\overbar{\str{G}}$.
\end{lemma}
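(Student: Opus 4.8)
The plan is to argue by contradiction: assume $\overbar{\str{G}}$ contains a triangle $\{u,v,w\}$ forbidden by the $C$ bound, label its edges so that $\bar a=\bar d(u,v)\le \bar b=\bar d(v,w)\le \bar c=\bar d(u,w)$, and recall from the analysis of $C$-bound triangles above that the two larger edges satisfy $\bar b,\bar c>M$. Fixing the given completion $\str{G}'=(G,d')\in\mathcal A^\delta_{K,C}$, I would invoke the Optimality Lemma (Lemma~\ref{lem:bestcompletion}): since $\bar d(v,w),\bar d(u,w)>M$, the only admissible alternative in that lemma is $d'(v,w)\ge \bar b$ and $d'(u,w)\ge \bar c$, i.e.\ the two long edges can only grow when passing to $\str{G}'$. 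Hence it suffices to show $d'(u,v)\ge \bar a$: that would force $d'(u,v)+d'(v,w)+d'(u,w)\ge \bar a+\bar b+\bar c\ge C$, exhibiting a $C$-forbidden triangle in $\str{G}'$ and contradicting $\str{G}'\in\mathcal A^\delta_{K,C}$.

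Next I would dispatch the cases in which the short edge cannot shrink. If $\bar a>M$, the Optimality Lemma gives $d'(u,v)\ge \bar a$ directly. If $\bar a=M$, then the forbidden triangle has an edge of length $M$, so by the $\mathbb F_M$ Completeness Lemma (Lemma~\ref{lem:misgood}) the edge $u,v$ already lies in $\str{G}$, whence $d'(u,v)=M=\bar a$. Likewise, if $\bar a<M$ but $u,v$ is an edge of $\str{G}$, then $d'(u,v)=\bar a$ because $\str{G}'$ is a completion of $\str{G}$. In each of these situations $d'(u,v)\ge\bar a$, and the perimeter argument of the first paragraph closes the case.

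The hard part will be the remaining case, where $\bar a<M$ and the edge $u,v$ was genuinely inserted by the algorithm; here the Optimality Lemma permits $d'(u,v)<\bar a$, so the perimeter comparison in $\str{G}'$ breaks down. The key idea is to stop comparing with $\str{G}'$ and instead derive a contradiction internal to the algorithm, using the third vertex $w$ as a witness. Set $x=C-1-\bar b-\bar c$. The perimeter bound $\bar b+\bar c\ge C-\bar a$ gives $x\le \bar a-1$, while $\bar b,\bar c\le\delta$ together with acceptability ($C\ge 2\delta+2$) gives $x\ge C-1-2\delta\ge 1$; thus $1\le x<\bar a<M$. By construction the fork $(\bar c,\bar b)$ at $w$ lies in $\mathcal F^C_x\subseteq\mathbb F_M(x)$, so $x$ depends on both $\bar b$ and $\bar c$. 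The Time Consistency Lemma (Lemma~\ref{lem:expandtime}) then yields $t_M(\bar b),t_M(\bar c)<t_M(x)<t_M(\bar a)$, so the edges $v,w$ and $u,w$ are already present before step $t_M(x)$. Consequently $w$ witnesses a completion of the still-unfilled pair $u,v$ no later than step $t_M(x)<t_M(\bar a)$, so that $u,v$ receives its label strictly before step $t_M(\bar a)$ and hence $\bar d(u,v)\ne \bar a$, contradicting $\bar d(u,v)=\bar a$.

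I expect the main obstacle to be precisely this last case: recognising that the failure of the naive perimeter comparison is repaired by the \emph{earliest-completion-wins} behaviour of the algorithm, i.e.\ that the opposite vertex $w$ of a would-be $C$-forbidden triangle is itself an $\mathcal F^C$-witness that fills $u,v$ with a strictly shorter label at a strictly earlier time. The routine but necessary supporting work is the verification of $1\le x<\bar a$ from acceptability and the bookkeeping that the long edges $v,w$ and $u,w$ are present in time, both of which rest on the Time Consistency Lemma.
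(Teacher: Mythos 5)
Your proof is correct and follows essentially the same route as the paper's: the Optimality Lemma forces the two long edges to grow in any true completion, and the genuinely inserted short edge is ruled out by the timing argument with $x=C-1-\bar b-\bar c$ (the paper's $\bar a$) depending on $\bar b,\bar c$ via $\mathcal F^C_x$. Your treatment is in fact slightly more careful than the paper's, which folds the $\bar a=M$ and $\bar a<M$ subcases together and does not explicitly verify $x\geq 1$.
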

\begin{proof}
Suppose for a contradiction that there is a triangle with vertices $u,v,w$ in $\overbar{\str{G}}$ such that $\bar{d}(u,v)+\bar{d}(v,w)+\bar{d}(u,w)\geq C$. For short let $a=\bar{d}(u,v)$, $b=\bar{d}(v,w)$ and $c=\bar{d}(u,w)$. Let $a',b',c'$ be the corresponding edge lengths in an arbitrary completion of $\str{G}$ into $\mathcal A^\delta_{K,C}$. Then two cases can appear. 

First assume that $a,b,c > M$. Then by Lemma \ref{lem:bestcompletion} we have $a'\geq a$, $b'\geq b$ and $c'\geq c$. Hence $a' + b' + c' \geq C$, which is a contradiction.

In the other case we can assume without loss of generality that $a\leq M$, $b\geq c>M$ and $a+b+c\geq C$. Again by Lemma \ref{lem:bestcompletion} we have that $b'\geq b$ and $c'\geq c$ and $a'\leq a$. If the edge $(u,v)$ was already in $\str{G}$, then clearly $a' + b' + c' \geq a+b+c\geq C$, which is a contradiction. If $(u,v)$ was not already an edge in $\str{G}$, then it was added by the completion algorithm with magic parameter $M$ in step $t(a)$. Let $\bar{a}=C-1-b-c$. Then clearly $\bar{a}<a$, which means that $t_M(\bar{a}) < t_M(a)$, and as $\bar{a}$ depends on $b,c$, we have $t_M(b),t_M(c)<t_M(\bar{a})$. But that means that the completion with magic parameter $M$ actually sets the length of the edge $u,v$ to be $\bar{a}$ in step $t_M(\bar{a})$, which is a contradiction.
\end{proof}

\begin{lemma}[Metric Lemma]
Let $\str{G}=(G,d)$ be an edge-labelled graph such that there is a completion of $\str{G}$ into $\mathcal A^\delta_{K,C}$ and let $\overbar{\str{G}}=(G,\bar{d})$ be its completion with magic parameter $M$. Then there is no non-metric triangle in $\overbar{\str{G}}$.
\end{lemma}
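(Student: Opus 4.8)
The plan is to argue by contradiction, in parallel with the preceding $C$-bound Lemma, leaning on the Optimality Lemma (Lemma~\ref{lem:bestcompletion}). Suppose $\overbar{\str G}$ contains a non-metric triangle; let its longest edge be $(u,w)$ and write $a=\bar d(u,v)$, $b=\bar d(v,w)$, $c=\bar d(u,w)$, so that $a+b<c$, and assume without loss of generality that $a\le b$. Fix an arbitrary completion $\str G'=(G,d')\in\mathcal A^\delta_{K,C}$. The first observation is that $a<M$: otherwise $a+b\ge 2a\ge 2M\ge\delta\ge c$, contradicting $a+b<c$. Hence Lemma~\ref{lem:bestcompletion} gives $d'(u,v)\le a$. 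In each case the goal is either to produce a violation of the triangle inequality inside $\str G'$ (absurd, as $\str G'$ is metric), or to show that the completion algorithm would actually have assigned a different length to one of the three edges.

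First I would dispose of the case $c\le M$. Here $a+b<c\le M$, so the fork $(a,b)$ at $v$ lies in $\mathcal F^+_{a+b}\subseteq\mathbb F_M(a+b)$. If $(u,w)$ is an edge of $\str G$ then $d'(u,w)=c$, and since $b<M$ (the value $b=M$ would force $a<0$) Lemma~\ref{lem:bestcompletion} gives $d'(v,w)\le b$, whence $c=d'(u,w)\le d'(u,v)+d'(v,w)\le a+b<c$, a contradiction. Otherwise $(u,w)$ was inserted by the algorithm at step $t_M(c)$; but the fork $(a,b)$ would already complete $(u,w)$ to length $a+b$ at the strictly earlier step $t_M(a+b)$, contradicting $\bar d(u,w)=c$. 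Here I would use that $t_M$ is increasing below $M$ and that $(u,v),(v,w)$ are present before step $t_M(a+b)$; the boundary value $c=M$ cannot occur with $(u,w)\notin\str G$ because, lying in a forbidden triangle, such an edge is original by the $\mathbb F_M$ Completeness Lemma (Lemma~\ref{lem:misgood}).

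Then comes the case $c>M$, where Lemma~\ref{lem:bestcompletion} yields $d'(u,w)\ge c$. If $b<M$, or if $(v,w)$ is an edge of $\str G$, then $d'(v,w)\le b$, so $d'(u,v)+d'(v,w)\le a+b<c\le d'(u,w)$, again contradicting the triangle inequality in $\str G'$. The remaining, and genuinely hardest, configuration is $b>M$ with $(v,w)$ inserted by the algorithm: here Lemma~\ref{lem:bestcompletion} only gives $d'(v,w)\ge b$, so $\str G'$ need not be violated, and one must instead exploit the order in which edges are added. The key point is that $a+b<c$ forces $c-a>b>M$, so the fork $(a,c)$ at $u$ lies in $\mathcal F^-_{c-a}=\mathbb F_M(c-a)$; by the Time Consistency Lemma (Lemma~\ref{lem:expandtime}) both $(u,v)$ and $(u,w)$ are present before step $t_M(c-a)$, while $c-a>b$ forces $t_M(c-a)<t_M(b)$. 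Thus the algorithm would set $\bar d(v,w)=c-a\ne b$ before step $t_M(b)$, contradicting $\bar d(v,w)=b$. I expect this last configuration, where the two longer edges both exceed $M$ and no comparison with $\str G'$ can help, to be the main obstacle, overcome precisely by re-completing the middle edge through the $\mathcal F^-$ fork $(a,c)$ and invoking Lemma~\ref{lem:expandtime}.
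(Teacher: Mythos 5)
Your proof is correct and follows essentially the same route as the paper's: after observing $a<M$, you split on the position of $b,c$ relative to $M$, use the Optimality Lemma to transfer the violation into the metric completion $\str{G}'$ whenever the relevant edges satisfy $d'\le\bar d$, and in the remaining configuration ($b,c>M$) invoke the $\mathcal F^-$ fork $(a,c)$ together with the Time Consistency Lemma to show the algorithm would have set $\bar d(v,w)=c-a$ instead of $b$ --- exactly the paper's argument. Your case organisation ($c\le M$ versus $c>M$, with subcases on whether edges are original) differs only cosmetically from the paper's three cases, and your extra care with the boundary values $c=M$ and edges already present in $\str{G}$ is consistent with (indeed slightly more explicit than) the published proof.
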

\begin{proof}
Suppose for a contradiction that there is a triangle with vertices $u,v,w$ in $\overbar{\str{G}}$ such that $\bar{d}(u,v)+\bar{d}(v,w)<\bar{d}(u,w)$. (Denote $a=\bar{d}(u,v)$, $b=\bar{d}(v,w)$ and $c=\bar{d}(u,w)$ and assume without loss of generality $a\leq b < c$.) Let $a',b',c'$ be the corresponding edge lengths in an arbitrary completion of $\str{G}$ into $\mathcal A^\delta_{K,C}$.

We shall distinguish three cases.
\begin{enumerate}
\item If $a,b,c < M$, then $t_M(a)\leq t_M(b) < t_M(a+b) < t_M(c)$, which means that $c$ must be already in $\str{G}$. But by Lemma \ref{lem:bestcompletion}, we have that $a' + b' \leq a  + b < c = c'$, which is a contradiction.

\item If $a,b<M$ and $c\geq M$, then by Lemma \ref{lem:bestcompletion} we have $a'\leq a$, $b'\leq b$ and $c'\geq c$, hence the triangle $a',b',c'$ is again non-metric.

\item If $a<M$ and $b,c\geq M$, then by Lemma \ref{lem:bestcompletion} we have $a'\leq a$ and $c'\geq c$. If $b$ was already in $\str{G}$, then $\str{G}$ has no completion -- a contradiction. Otherwise clearly $c-a > b \geq M$, so $t_M(c-a) < t_M(b)$. But as $c-a$ depends on $c$ and $a$, we get $t_M(c-a) > t_M(c),t_M(a)$, which means that the completion algorithm with magic parameter $M$ would complete the edge $v,w$ with the length $c-a$ and not with $b$.
\end{enumerate}
\end{proof}

\begin{lemma}[$K$-bound Lemma]
Let $\str{G}=(G,d)$ be an edge-labelled graph such that there is a completion of $\str{G}$ into $\mathcal A^\delta_{K,C}$ and let $\overbar{\str{G}}=(G,\bar{d})$ be its completion with magic parameter $M$. Then there is no triangle forbidden by the $K$ bound in $\overbar{\str{G}}$.
\end{lemma}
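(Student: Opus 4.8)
The plan is to deduce the result from the Optimality Lemma (Lemma~\ref{lem:bestcompletion}) together with a parity-tracking argument. Fix an arbitrary completion $\str{G}'=(G,d')$ of $\str{G}$ into $\mathcal A^\delta_{K,C}$, which exists by hypothesis, and suppose for contradiction that $\overbar{\str{G}}$ contains a triangle $u,v,w$ forbidden by the $K$ bound. By the Metric Lemma this triangle is metric, so the case analysis preceding Lemma~\ref{lem:expandtime} applies: its three edges $a=\bar d(u,v)$, $b=\bar d(v,w)$, $c=\bar d(u,w)$ all satisfy $a,b,c<K\le M$, while $a+b+c$ is odd and less than $2K+1$. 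The Optimality Lemma already gives $d'(u,v)\le a$, $d'(v,w)\le b$ and $d'(u,w)\le c$; if I can additionally show that these distances have the same parities as $a,b,c$, then the triangle $u,v,w$ in $\str{G}'$ is metric, has odd perimeter, and has perimeter at most $a+b+c<2K+1$, hence is itself forbidden by the $K$ bound, contradicting $\str{G}'\in\mathcal A^\delta_{K,C}$.

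Thus the heart of the proof is the claim that for every edge $xy$ of $\overbar{\str{G}}$ with $\bar d(x,y)<K$ one has $d'(x,y)\equiv \bar d(x,y)\pmod{2}$. The structural fact that makes this tractable is that short edges are never produced by a $C$-fork: since the parameters are acceptable, $2\delta+K<C$ gives $C-1-2\delta\ge K$, so for $\ell<K$ there are no $(p,q)\in\mathcal D$ with $p+q=C-1-\ell$, i.e. $\mathcal F^C_\ell=\emptyset$. Consequently an edge of length $\ell<K\le M$ in $\overbar{\str{G}}$ is either already an edge of $\str{G}$ or was inserted through an $\mathcal F^+_\ell$ fork.

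I would prove the claim by induction on $\ell=\bar d(x,y)$. If $xy$ is an edge of $\str{G}$ then $d'(x,y)=\ell$ and there is nothing to prove. Otherwise there is a witness $z$ with $\{\bar d(x,z),\bar d(z,y)\}=\{p,q\}$ and $p+q=\ell$, so $p,q<\ell<K$. By the Optimality Lemma the triangle $x,z,y$ in $\str{G}'$ has perimeter $d'(x,y)+d'(x,z)+d'(z,y)\le \ell+p+q=2\ell<2K+1$; since $\str{G}'$ is metric and contains no triangle of odd perimeter below $2K+1$, this perimeter must be even. Applying the induction hypothesis to $p$ and $q$ yields $d'(x,z)\equiv p$ and $d'(z,y)\equiv q\pmod{2}$, whence $d'(x,y)\equiv p+q=\ell\pmod{2}$, completing the induction and hence the proof as outlined above.

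The main obstacle is precisely this parity bookkeeping. Unlike in the Metric and $C$-bound Lemmas, a short edge may genuinely be shortened by a valid completion, so one cannot hope to match magnitudes; only parities survive. The two ingredients that rescue the argument are the vanishing of $\mathcal F^C_\ell$ for $\ell<K$, which forces every short edge to arise from a sum-fork and thus to decompose into strictly shorter short edges that feed the induction, and the defining property of $\str{G}'$ forbidding odd perimeters below $2K+1$, which upgrades the Optimality bound $\le$ into an exact congruence modulo $2$. The only routine point to verify is that every auxiliary triangle encountered has perimeter below $2K+1$, which follows immediately from $2\ell<2K$.
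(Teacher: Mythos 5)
Your proof is correct and follows essentially the same route as the paper's: both rest on the Optimality Lemma giving $d'\le\bar d$ for short edges, the observation that $\mathcal F^C_\ell=\emptyset$ for $\ell<K$ (so every short non-original edge comes from an $\mathcal F^+$ fork), and parity-tracking through the witness of each such edge. In fact your explicit induction on the edge length is a more complete rendering of the argument than the published one, which only gestures at the needed recursion down the witness tree with ``the same argument can be made for the edges $b,c$''.
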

\begin{proof}
Suppose for a contradiction that there is a metric triangle with vertices $u,v,w$ in $\overbar{\str{G}}$ such that $\bar{d}(u,v)+\bar{d}(v,w)+\bar{d}(u,w)$ is odd and less than $2K+1$. Denote $a=\bar{d}(u,v)$, $b=\bar{d}(v,w)$ and $c=\bar{d}(u,w)$. As we observed above, $a,b,c < K\leq M$. Also assume without loss of generality, that $(u,v)$ was not an edge of $\str{G}$, but was added by the completion algorithm.

Notice that for any two distances $e,f$ it holds that $C-1-e-f \geq K$ (simply because $e,f\leq \delta$ and $C>2\delta+K$), so from the definition of $F_x^C$ we can see that no edge $a,b,c$ was added because of $F_x^C$, and as they are all small, they either were already in $\str{G}$ or they were added because of $F_x^+$.

Let $a',b',c'$ be the corresponding edges in some completion of $\str{G}$ into $\mathcal A^\delta_{K,C}$. Then Lemma \ref{lem:bestcompletion} implies that $a'\leq a$, $b'\leq b$ and $c'\leq c$ and thus $a'+b'+c'<2K+1$. So if there is a completion in which the triangle $u,v,w$ is not forbidden by the $K$ bound, it is because $a'+b'+c'$ is even. Since $a < M$ there is an $x$ such that $\bar{d}(u,x) + \bar{d}(v,x) = a$, if $a$ was not in $\str{G}$. But that means that $\bar{d}(u,x) + \bar{d}(v,x) + \bar{d}(u,v) = 2a < 2K$. Hence by changing the parity of $a$, the triangle $u,v,w$ becomes forbidden by the $K$ bound. The same argument can be made for the edges $b,c$ and that gives us a contradiction to $\str{G}$ having a completion into $\mathcal A^\delta_{K,C}$.
\end{proof}

From these lemmas we immediately get the following conclusion:

\begin{theorem} \label{thm:magiccompletion}
Let $\str{G}=(G,d)$ be an edge-labelled graph  such that there is a completion of $\str{G}$ into $\mathcal A^\delta_{K,C}$ and let $\bar{\str{G}}=(G,\bar{d})$ be its completion with magic parameter $M$. Then $\bar{\str{G}}\in\mathcal A^\delta_{K,C}$.

$\overbar{\str{G}}$ is optimal in the following sense:
Let $\str{G}'=(G,d')\in\mathcal A^\delta_{K,C}$ be an arbitrary completion of $\str{G}$ in  $\mathcal A^\delta_{K,C}$, then 
for every pair of vertices $u,v\in G$ either $d'(u,v) \geq \bar{d}(u,v) \geq M$ or $d'(u,v) \leq \bar{d}(u,v) \leq M$ holds.

Finally, every automorphism of $\str{G}$ is also an automorphism of $\overbar{\str{G}}$.
\end{theorem}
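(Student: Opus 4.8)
The plan is to read off Theorem~\ref{thm:magiccompletion} as an immediate corollary of the lemmas proved above, so that the proof amounts to collecting the right statements rather than producing any new argument. First I would record that $\overbar{\str{G}}$ is a complete edge-labelled graph all of whose distances lie in $\{1,\dots,\delta\}$: this is built into Definition~\ref{defn:ftmcompletion}, since every $(\mathbb F_M,t_M,M)$ step labels a new edge with a value $x<M$ (for $\mathcal F^+_x$ and $\mathcal F^C_x$ forks) or a value $x>M$ with $x\le\delta$ (for $\mathcal F^-_x$ forks), and every pair that survives as a non-edge is finally assigned the distance $M\le\delta$. Hence $\overbar{\str{G}}$ is complete with integer distances bounded by $\delta$, as required for membership in $\mathcal A^\delta_{K,C}$.

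Next I would eliminate the three families of forbidden triangles. By the description of $\mathcal A^\delta_{K,C}$ recalled above, a complete graph with distances at most $\delta$ belongs to the class exactly when it contains no non-metric triangle, no triangle forbidden by the $K$ bound, and no triangle forbidden by the $C$ bound. Under the standing hypothesis that $\str{G}$ admits \emph{some} completion into $\mathcal A^\delta_{K,C}$, these three possibilities are ruled out by the Metric Lemma, the $K$-bound Lemma, and the $C$-bound Lemma respectively. Together with the previous paragraph this yields the first assertion, $\overbar{\str{G}}\in\mathcal A^\delta_{K,C}$.

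The final two assertions then require nothing new: the optimality statement is verbatim the conclusion of the Optimality Lemma (Lemma~\ref{lem:bestcompletion}), and the preservation of automorphisms is verbatim the Automorphism Preservation Lemma (Lemma~\ref{lem:aut}). The only point demanding any care --- and the nearest thing to an obstacle --- is that each of the three bound lemmas shares precisely the hypothesis of the theorem, namely that a completion of $\str{G}$ into $\mathcal A^\delta_{K,C}$ exists; since all of them derive their contradictions by comparing $\overbar{\str{G}}$ with an arbitrary such witnessing completion through Lemma~\ref{lem:bestcompletion}, no additional assumptions are introduced and the assembly is legitimate.
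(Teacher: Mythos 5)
Your proposal is correct and matches the paper exactly: the paper itself introduces the theorem with ``From these lemmas we immediately get the following conclusion,'' assembling it from the $C$-bound, Metric, and $K$-bound Lemmas together with Lemmas~\ref{lem:bestcompletion} and~\ref{lem:aut}, just as you do. Your added remark that $\overbar{\str{G}}$ is complete with distances in $\{1,\dots,\delta\}$ is a harmless (and slightly more careful) elaboration of what the paper leaves implicit.
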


\section{Proofs of the main results}
Theorem \ref{thm:magiccompletion} implies the crucial lemma.
\begin{lemma}[Finite Obstacles Lemma]
\label{lem:obstacles}
For every acceptable choice of $\delta$, $K$ and $C$ the class $\mathcal A^\delta_{K,C}$ has a finite set of obstacles (which are all cycles of diameter at most $2^\delta 3$).
\end{lemma}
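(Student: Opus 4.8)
The plan is to realise the set of obstacles through the completion algorithm with magic parameter $M$ and to bound the size of the structures that can obstruct completion. Fix a magic distance $M$. The definition of obstacle requires two things: that $\mathcal A^\delta_{K,C}\subseteq \Forb_\delta(\mathcal O)$, and that every finite $\str{G}\in\Forb_\delta(\mathcal O)$ admits a completion into $\mathcal A^\delta_{K,C}$. I will take $\mathcal O$ to be the set of all edge-labelled cycles with edge lengths in $\{1,\dots,\delta\}$ and at most $3\cdot 2^\delta$ vertices which have no completion into $\mathcal A^\delta_{K,C}$; this set is finite, since there are only finitely many edge-labelled cycles of bounded length with bounded labels. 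The key reduction, furnished by Theorem~\ref{thm:magiccompletion}, is that a $\delta$-bounded graph $\str{G}$ completes into $\mathcal A^\delta_{K,C}$ if and only if its completion $\overbar{\str{G}}$ with magic parameter $M$ lies in $\mathcal A^\delta_{K,C}$, equivalently if and only if $\overbar{\str{G}}$ contains no forbidden triangle (recall $\overbar{\str{G}}$ is always a completion of $\str{G}$, so if it is in the class then $\str{G}$ completes, while the converse is exactly Theorem~\ref{thm:magiccompletion}).

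The heart of the argument is the second property, proved in contrapositive form: if $\str{G}$ has no completion, I exhibit an obstacle from $\mathcal O$ mapping homomorphically into $\str{G}$. By the reduction, $\overbar{\str{G}}$ contains a forbidden triangle on vertices $u,v,w$. I now \emph{unfold} each of its three edges. An edge of $\overbar{\str{G}}$ is either already present in $\str{G}$, or it was inserted by the algorithm at some finite time via a witness $x$ and a fork $(a,b)\in\mathbb F_M(c)$, in which case it is replaced by the two edges at $x$ of lengths $a$ and $b$. By the Time Consistency Lemma~\ref{lem:expandtime}, these two witnessing edges have strictly smaller value under $t_M$ than the edge being unfolded; moreover, since edges of length $M$ are inserted only in the final step, no witnessing edge has length $M$, so unfolding never stalls. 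Iterating, each edge of the triangle is expanded into a path of edges of $\str{G}$, the recursion depth being bounded by the number of distinct finite values of $t_M$, which is at most $\delta$. Since each unfolding step is binary, each triangle edge expands into a path with at most $2^\delta$ edges; should a triangle edge itself have length $M$, the $\mathbb F_M$ Completeness Lemma~\ref{lem:misgood} guarantees it already lies in $\str{G}$ and contributes a single edge. Concatenating the three resulting paths along the corners $u,v,w$ produces a closed walk in $\str{G}$ with at most $3\cdot 2^\delta$ edges; reading it as an abstract edge-labelled cycle $\str{O}$ (formal vertices for the corners and the witnesses) gives a cycle with at most $3\cdot 2^\delta$ vertices together with a homomorphism $\str{O}\to\str{G}$ sending each formal vertex to the corresponding vertex of $\str{G}$.

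It remains to check that $\str{O}\in\mathcal O$, i.e. that $\str{O}$ has no completion. For this I run the completion algorithm on $\str{O}$ itself. All the leaf-edges used in the unfolding are present in $\str{O}$, so the same witnesses fire and, climbing the unfolding tree, the algorithm reinserts all intermediate edges and finally the three edges of the triangle with their original lengths: indeed, since the homomorphism $\str{O}\to\str{G}$ carries every fork of $\str{O}$ to a fork of $\str{G}$ and is injective on the three corners, no fork can set an edge of $\str{O}$ at an earlier time than the corresponding edge receives in $\overbar{\str{G}}$, so in particular none of the three corner edges is fixed before its intended step. Hence $\overbar{\str{O}}$ contains the same forbidden triangle, so by the reduction $\str{O}$ has no completion and indeed $\str{O}\in\mathcal O$. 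This proves that every finite $\str{G}\in\Forb_\delta(\mathcal O)$ completes into $\mathcal A^\delta_{K,C}$.

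For the first property, $\mathcal A^\delta_{K,C}\subseteq\Forb_\delta(\mathcal O)$, suppose some $\str{O}\in\mathcal O$ admitted a homomorphism $h$ into a member $\str{M}\in\mathcal A^\delta_{K,C}$. As $\str{M}$ is a complete metric space in the class, pulling back its metric along $h$ yields a completion of $\str{O}$ whenever $h$ is injective, contradicting $\str{O}\in\mathcal O$; since the edges of $\str{O}$ have positive length, $h$ can identify only pairwise non-adjacent vertices, and such identifications can be separated while staying inside the strong amalgamation class $\mathcal A^\delta_{K,C}$, again producing a completion of $\str{O}$ and the same contradiction. Thus $\mathcal O$ is a finite set of obstacles for $\mathcal A^\delta_{K,C}$, all of which are cycles with at most $3\cdot 2^\delta$ vertices. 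The main obstacle in this plan is precisely the unfolding step: one must argue that tracing a forbidden triangle back through the algorithm's witnesses terminates with a genuinely non-completable cycle of controlled size, and the Time Consistency Lemma~\ref{lem:expandtime} is exactly what makes the recursion strictly decreasing and hence the cycle length exponentially bounded.
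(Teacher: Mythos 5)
Your proposal is correct and follows essentially the same route as the paper's own proof: both take $\mathcal O$ to be all non-completable cycles on at most $2^\delta 3$ vertices, extract a forbidden triangle from the magic-parameter completion of a non-completable $\str{G}$, unfold its edges backwards through the algorithm's witnesses (with the Time Consistency Lemma bounding the depth by $\delta$ and hence the cycle by $3\cdot 2^\delta$ edges), argue the resulting cycle is itself non-completable by rerunning the algorithm, and use strong amalgamation to rule out homomorphisms from obstacles into members of $\mathcal A^\delta_{K,C}$. The only difference is presentational (you unfold each edge recursively to the leaves at once, while the paper phrases this as a backward induction over the algorithm's stages), and you are in fact slightly more explicit than the paper about why the rerun on the unfolded cycle cannot set an edge earlier or differently than intended.
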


\begin{example}\rm
Consider $\mathcal A^6_{2,15}$ discussed in Section~\ref{sec:algorithm}.
The set of obstacles of this class contains all the forbidden triangles listed
earlier, but in addition to that it also contains some cycles with 4 or more vertices. A complete
list of those can be obtained by running the algorithm backwards from the forbidden
triangles.

All such cycles with 4 vertices can be constructed from the triangles by substituting distances by the forks depicted at Figure~\ref{fig:forks}. This means substituting $5$ for $16$ or $61$, $2$ for $11$ or $66$, $3$ for $12$, $21$, $56$ or $65$. With equivalent cycles removed this gives the following list:
$$
\begin{array}{rrcl}
\hbox{non-metric:}&113&\implies& 11\mathbf{12}^{**}, 11\mathbf{56}^{**}\\
&115 &\implies& 11\mathbf{16},  11\mathbf{61}^*\\
&124 &\implies& 1\mathbf{11}4,  1\mathbf{66}4\\
&125 &\implies& 1\mathbf{11}5,  1\mathbf{66}5,  12\mathbf{16},  12\mathbf{61}\\
&126 &\implies& 1\mathbf{11}6^*,  1\mathbf{66}6\\
&136 &\implies& 1\mathbf{12}6^*,  1\mathbf{21}6^*,  1\mathbf{56}6^*,  1\mathbf{65}6\\
&135 &\implies& 1\mathbf{12}5,  1\mathbf{21}5,  1\mathbf{56}5,  1\mathbf{65}5,  13\mathbf{16},  13\mathbf{61}\\
&225 &\implies& \mathbf{11}25^*,  \mathbf{66}25,  2\mathbf{11}5^*,  2\mathbf{66}5^*,  22\mathbf{16},  22\mathbf{61}^*\\
&226 &\implies& \mathbf{11}26^*,  \mathbf{66}26,  2\mathbf{11}6^*,  2\mathbf{66}6^*\\
&236 &\implies& \mathbf{11}36^*,  \mathbf{66}36,  2\mathbf{12}6,  2\mathbf{21}6^*,  2\mathbf{56}6^*,  2\mathbf{65}6\\
\hbox{$C$ bound:}&555 &\implies& \mathbf{16}55^*,  \mathbf{61}55^*,  5\mathbf{16}5^*,  5\mathbf{61}5^*,  55\mathbf{16}^*,  55\mathbf{61}^*\\
&366 &\implies& \mathbf{12}66^{**}, \mathbf{56}66^{**}\\
&456 &\implies& 4\mathbf{16}6^*,  4\mathbf{61}6\\
&556 &\implies& \mathbf{16}56^*,  \mathbf{61}56^*,  5\mathbf{16}6^*,  5\mathbf{61}6^*\\
&566 &\implies& \mathbf{16}66^*,  \mathbf{61}66^*
\end{array}
$$
Not all expansions here are necessarily forbidden, because not all of them correspond to a valid run of the algorithm.  However with the exception of cases denoted by $**$ all the above 4-cycles are forbidden. Also the list contains numerous duplicates which are denoted by $*$.

By repeating this procedure one obtains the following cycles with five and six vertices which cannot be completed into this class of metric graphs.
$$11116, 16616, 16661, 11115, 11665, 11216, 11261,$$
$$16615, 16165, 16561, 66665, 66216, 66261, 66666,$$
$$111116, 116616, 116661, 161616, 666616$$
Because there are no distances 5, 2 or 3 in the list of cycles with 6 edges, this completes the list of obstacles.

\begin{figure}
\centering
\includegraphics{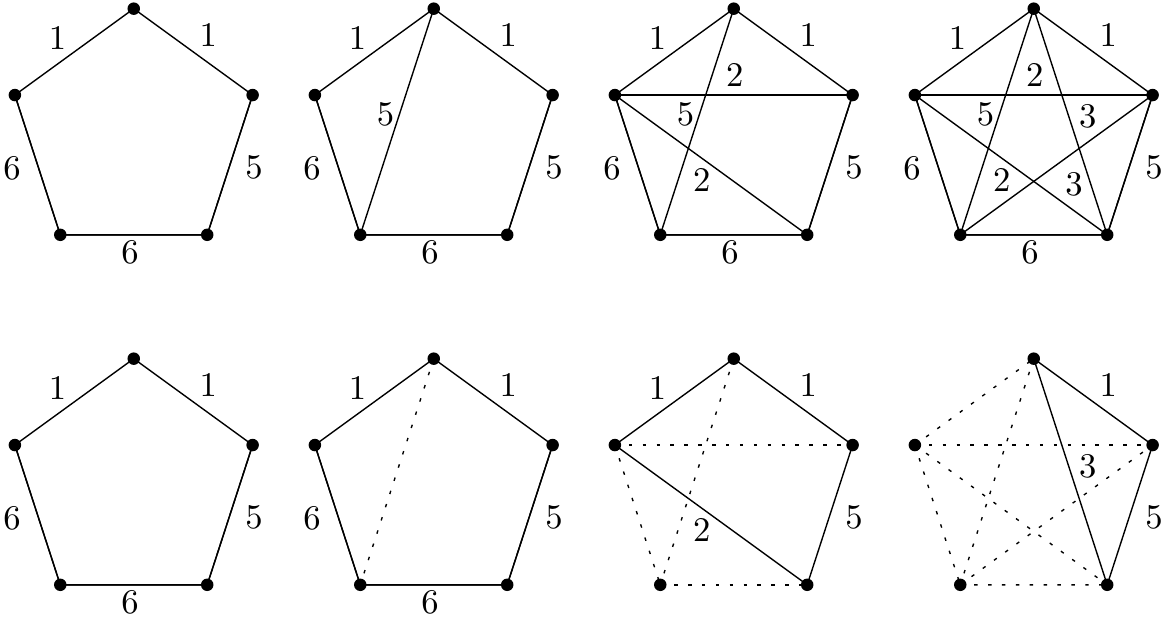}
\caption{Failed run attempting to complete the cycle 11665. In the bottom row is the backward run from non-metric triangle 135 to the original obstacle used in the proof of Lemma~\ref{lem:obstacles}.}
\label{fig:11665}
\end{figure}%
An example of a failed run of the algorithm trying to complete one of the forbidden cycles is depicted in Figure~\ref{fig:11665}.
\end{example}

\begin{proof}[Proof of Lemma \ref{lem:obstacles}]
Let $\str{G}=(G,d)$ be an edge-labelled graph with all distances at most $\delta$ and no completion in
$\mathcal A^\delta_{K,C}$.  We seek a subgraph of $\str{G}$ of bounded size
which has also no completion into $\mathcal A^\delta_{K,C}$.

Consider the sequence of graphs $\str{G}_0, \str{G}_1,\dots,\str{G}_{2M+1}$ as given by Definition~\ref{defn:ftmcompletion}
when completing $\str{G}$ with magic parameter $M$.  Set $\str{G}_{2M+2}$ to be the actual completion.

Because $\str{G}_{2M+2}\notin \mathcal A^\delta_{K,C}$ we know it contains a forbidden triangle $\str{O}$.
This triangle always exists, because $\mathcal A^\delta_{K,C}$ is 3-constrained.
By backward induction on $k=2M+1,2M,\dots, 0$ we obtain cycles $\str{O}_k$
of $\str{G}_k$ such that $\str{O}_k$ has no completion in $\mathcal A^\delta_{K,C}$
and there exists a homomorphism $f\colon \str{O}_k\to \str{G}_k$.

Put $\str{O}_{2M+1}=\str{O}$. By Lemma~\ref{lem:misgood} we know that this triangle is also in $\str{G}_{2M+1}$.
At step $k$ consider every edge $u,v$ of $\str{O}_{k+1}$ which is not an edge of $\str{G}_k$ considering
its witness $w$ (i.e. vertex $w$ such that the edges $u,w$ and $v,w$ implied the addition of the edge $u,v$) and extending
$\str{O}_k$ by a new vertex $w'$ and edges $d(u,w')=d(u,w)$ and $d(v,w')=d(v,w)$.
One can verify that the completion algorithm will fail to complete $\str{O}_k$ the same way
as it failed to complete $\str{O}_{k+1}$ and moreover there is a homomorphism $\str{O}_{k+1}\to \str{G}_k$.

At the end of this procedure we obtain $\str{O}_0$, a subgraph of $\str{G}$, that
has no completion into $\mathcal A^\delta_{K,C}$.
The bound on the size of the cycle follows from the fact that only $\delta$ steps
of the algorithm are actually changing the graph and each time every edge may
introduce at most one additional vertex.

Let $\mathcal O$ consist of all edge-labelled cycles with at most $2^\delta 3$
vertices that are not completable in $\mathcal A^\delta_{K,C}$. Clearly $\mathcal O$ is finite. To check that $\mathcal O$ is a set of obstacles it remains
to verify that there is no $\str{O}\in \mathcal O$ with a homomorphism
to some $\str{M}\in \mathcal A^\delta_{K,C}$. Denote by $\mathcal{O}'$ the set of all homomorphic images
of structures in $\mathcal{O}$ that are not completable in $\mathcal A^\delta_{K,C}$.
Assume, to the contrary, the existence of such an
$\str{O}=(O,d)\in \mathcal{O}'$ and $\str{M}=(M,d')$ and a homomorphism $f\colon \str{O}\to\str{M}$ and among all those choose one minimising 
$\lvert \str{O}\rvert$. It follows that $\lvert \str{O}\rvert-\lvert \str{M}\rvert=1$.
Denote by $x,y$ the pair of vertices identified by $f$. Let $\str{O}'=(O,d'')$  be
a metric graph such that $d''(z,z')=d(f(z),f(z'))$ for every pair $\{z,z'\}\neq \{x,x'\}$.
It follows that, because $\mathcal A^\delta_{K,C}$ has the strong amalgamation property, also
$\str{O}'=(O,d'')$ has a completion in $\mathcal A^\delta_{K,C}$.
\end{proof}
\label{sec:applications}
We now have all the tools necessary to prove the main theorems of this paper.
\begin{proof}[Proof of Theorem~\ref{thm:EPPA}]
We showed in Lemma~\ref{lem:obstacles} that $\mathcal A^\delta_{K,C}$ is contained in $\Forb(\mathcal O)$, where $\mathcal O$ is a subset of the set of edge-labelled cycles with at most $2^\delta3$ edges; the \Fraisse{} limit of $\mathcal A^\delta_{K,C}$ now can play the \role{} of $M$ in Theorem~\ref{thm:herwiglascar}. Because the magic completion preserves automorphisms (by Theorem~\ref{thm:magiccompletion}) the conclusion follows.
\end{proof}
\begin{proof}[Proof of Theorem~\ref{thm:ramsey}]
We have shown that $\mathcal A^\delta_{K,C}$ is locally finite in Lemma \ref{lem:obstacles}; since it is an amalgamation class, it is hereditary. Also, the classes studied here are the ones with primitive \Fraisse{} limits, and amalgamation is strong. Therefore we can apply Theorem~\ref{thm:localfini} to conclude that $\mathcal A^\delta_{K,C}$ is a Ramsey class.
\end{proof}

The analysis presented in this paper can decide whether a class has coherent EPPA (a strengthening of EPPA by Siniora and Solecki~\cite{solecki2009,Siniora} where the extensions have to compose whenever the partial automorphisms do) or the Ramsey property when applied to slight modifications of the generalised algorithm in nearly all other classes of metrically homogeneous graphs with finite diameter in Cherlin's catalogue, as well as most of the infinite-diameter cases. A full account of this fact will appear in~\cite{Aranda2017}. We also have reason to believe that the algorithmic approach presented here can be applied to more general classes of relational structures with forbidden configurations.

\section{Acknowledgements}
A significant part of this research was done when the authors were participating in the Ramsey DocCourse programme, in Prague 2016--2017. The eighth author would like to thank Jacob Rus for his Python style guidance.
Similar results were also independently obtained by Rebecca Coulson and will appear
in~\cite{Coulson}.
\bibliographystyle{amsplain}
\bibliography{ramsey.bib}
\end{document}